\newtheorem{proposition}{Proposition}[section]
\newtheorem{lemma}{Lemma}[section]
\newtheorem{remark}{Remark}[section]
\newtheorem{example}{Example}[section]
\DeclareSymbolFont{AMSb}{U}{msb}{M}{n}
\DeclareMathSymbol{\Eout}{\mathbin}{AMSb}{"45}
\DeclareMathSymbol{\N}{\mathbin}{AMSb}{"4E}
\DeclareMathSymbol{\PR}{\mathbin}{AMSb}{"50}
\DeclareMathSymbol{\R}{\mathbin}{AMSb}{"52}
\DeclareMathSymbol{\SI}{\mathbin}{AMSb}{"53}
\DeclareMathSymbol{\bU}{\mathbin}{AMSb}{"55}
\DeclareMathSymbol{\bV}{\mathbin}{AMSb}{"56}
\DeclareMathSymbol{\bW}{\mathbin}{AMSb}{"57}
\DeclareMathSymbol{\bX}{\mathbin}{AMSb}{"58}
\DeclareMathSymbol{\bY}{\mathbin}{AMSb}{"59}
\DeclareMathSymbol{\bZ}{\mathbin}{AMSb}{"5A}
\title{Information Geometric Nonlinear Filtering
   \thanks{Preprint of an article submitted for consideration in \textit{Infinite
   Dimensional Analysis, Quantum Probability and Related Topics} \copyright\ 2015,
   copyright World Scientific Publishing Company,
   http://www.worldscientific.com/worldscinet/idaqp}}
\author{Nigel J.~Newton
   \thanks{School of Computer Science and Electronic Engineering, University of Essex,
   Wivenhoe Park, Colchester, CO4 3SQ, UK. ({\tt njn@essex.ac.uk})}}
\begin{document}
\maketitle

\begin{abstract}
This paper develops information geometric representations for nonlinear filters in
continuous time.  The posterior distribution associated with an abstract nonlinear
filtering problem is shown to satisfy a stochastic differential equation on a Hilbert
information manifold. This supports the Fisher metric as a pseudo-Riemannian metric.
Flows of Shannon information are shown to be connected with the quadratic variation of
the process of posterior distributions in this metric.  Apart from providing a suitable
setting in which to study such information-theoretic properties, the Hilbert manifold
has an appropriate topology from the point of view of multi-objective filter
approximations.  A general class of finite-dimensional exponential filters is shown to
fit within this framework, and an intrinsic evolution equation, involving Amari's
$-1$-covariant derivative, is developed for such filters.  Three example systems, one
of infinite dimension, are developed in detail.

Keywords: Information Geometry, Information Theory, Nonlinear Filtering, Fisher Metric,
Quadratic Variation.

2010 MSC: 93E11 94A17 62F15 60J25 60J60 60G35 82C31
\end{abstract}

\newcommand{\ca}{{\cal A}}
\newcommand{\cb}{{\cal B}}
\newcommand{\cd}{{\cal D}}
\newcommand{\cf}{{\cal F}}
\newcommand{\cl}{{\cal L}}
\newcommand{\cm}{{\cal M}}
\newcommand{\cp}{{\cal P}}
\newcommand{\cq}{{\cal Q}}
\newcommand{\cs}{{\cal S}}
\newcommand{\cu}{{\cal U}}
\newcommand{\cv}{{\cal V}}
\newcommand{\cw}{{\cal W}}
\newcommand{\cx}{{\cal X}}
\newcommand{\cy}{{\cal Y}}
\newcommand{\cz}{{\cal Z}}

\newcommand{\relpq}{\cd(P\,|\,Q)}
\newcommand{\relqp}{\cd(Q\,|\,P)}

\newcommand{\Pdot}{{\dot{P}}}
\newcommand{\Qdot}{{\dot{Q}}}

\newcommand{\hhbar}{{\bar{h}}}
\newcommand{\Xbar}{{\bar{X}}}

\newcommand{\atil}{\tilde{a}}
\newcommand{\Btil}{\tilde{B}}
\newcommand{\etil}{\tilde{e}}
\newcommand{\htil}{\tilde{h}}
\newcommand{\Ntil}{\tilde{N}}
\newcommand{\Wtil}{\tilde{W}}
\newcommand{\Xtil}{\tilde{X}}
\newcommand{\xtil}{\tilde{x}}
\newcommand{\ptil}{\tilde{p}}
\newcommand{\util}{\tilde{u}}
\newcommand{\vtil}{\tilde{v}}
\newcommand{\ytil}{\tilde{y}}
\newcommand{\pitil}{\tilde{\pi}}
\newcommand{\etatil}{\tilde{\eta}}
\newcommand{\Omegatil}{\tilde{\Omega}}
\newcommand{\omegatil}{\tilde{\omega}}
\newcommand{\xitil}{\tilde{\xi}}
\newcommand{\zetatil}{\tilde{\zeta}}
\newcommand{\cftil}{\tilde{\cf}}
\newcommand{\PRtil}{\tilde{\PR}}
\newcommand{\Eouttil}{\tilde{\Eout}}

\newcommand{\bfa}{{\bf a}}
\newcommand{\bfe}{{\bf e}}
\newcommand{\bfu}{{\bf u}}
\newcommand{\bfU}{{\bf U}}
\newcommand{\bfv}{{\bf v}}
\newcommand{\bfV}{{\bf V}}
\newcommand{\bfw}{{\bf w}}
\newcommand{\bfW}{{\bf W}}
\newcommand{\bfy}{{\bf y}}
\newcommand{\bfz}{{\bf z}}
\newcommand{\bfZ}{{\bf Z}}

\newcommand{\phat}{\hat{p}}
\newcommand{\Phat}{\hat{P}}

\newcommand{\E}{{\hbox{\bf E}}}
\newcommand{\Emu}{{\hbox{\bf E}_\mu}}
\newcommand{\EP}{{\hbox{\bf E}_P}}
\newcommand{\spanof}{{\hbox{\rm span}}}
\newcommand{\tr}{{\hbox{\rm tr}}}
\newcommand{\half}{{\frac{1}{2}}}
\newcommand{\fndot}{{\,\cdot\,}}
\newcommand{\indic}{{\hbox{\bf 1}}}
\newcommand{\cond}{{\,|\,}}

\section{Introduction} \label{se:intro}

Let $(X_t\in\bX,\,t\ge 0)$ be a Markov ``signal'' process taking values in a
metric space $\bX$, and let $(Y_t\in\R^d,\,t\ge 0)$ be an ``observation'' process
defined by
\begin{equation}
Y_t = \int_0^t h_s(X_s)\,ds + B_t, \label{eq:obsproc}
\end{equation}
where $h:[0,\infty)\times\bX\rightarrow\R^d$ is a Borel measurable function, and
$(B_t\in\R^d,\,t\ge 0)$ is a $d$-vector Brownian motion, independent of $X$.  In this
context, the problem of nonlinear filtering is that of estimating $X_t$, at each time
$t$, from the observations available up to that time, $(Y_s,\,s\in[0,t])$. In order to
compute various optimal estimates of $X_t$, such as the maximum a-posteriori probability
estimate (if $\bX$ is discrete) or the minimum mean-square-error estimate (if $\bX$ is
a normed linear space), it is usually necessary to find, or at least to approximate,
the entire observation-conditional distribution of $X_t$. That a {\em regular} version
of such a distribution exists, and can be represented by an abstract version of Bayes'
formula, is one of the important early developments in the subject \cite{kast1}.
However, starting with the work of Wonham \cite{wonh1} and Shiryayev \cite{shir1},
much of the theory of nonlinear filtering concerns {\em recursive} filtering equations,
in which representations of the posterior distribution are shown to satisfy particular
stochastic differential equations.  The reader is referred to \cite{crro1} for
a wide range of articles on the theory and current practice of the subject.

Recursive filtering equations are typically expressed in ways that are specific to the
nature of the signal space $\bX$.  If, for example, $\bX$ is discrete, then the filter
can be expressed as a stochastic ordinary differential equation for the vector of
posterior probabilities of the individual states, $x\in\bX$ \cite{wonh1,shir1}; whereas,
if $X$ is a multidimensional diffusion process, then the filter can be expressed as a
stochastic partial differential equation for the posterior density \cite{kush1}. One of
the aims of this paper is to unify such results through the use of a filter ``state
space'' that is based on estimation theoretic constructs rather than the underlying
topology of $\bX$.  The state space used is a Hilbert manifold of probability measures
on $\bX$.  This has an appropriate topology for the study of both approximation errors
and information theoretic properties.  These notions are discussed next in the context
of an abstract Bayesian problem, in which the estimand $U:\Omega\rightarrow\bU$ and
observation $V:\Omega\rightarrow\bV$ are defined on a common probability space
$(\Omega,\cf,\PR)$, and take values in measure spaces $(\bU,\cu,\lambda_U)$ and
$(\bV,\cv,\lambda_V)$, respectively.  We assume that $P_{UV}\ll P_U\otimes\lambda_V$,
where $P_{UV}$ is the joint distribution of $(U,V)$, and $P_U$ is the marginal of $U$.
Let $\cp(\cu)$ be the set of probability measures on $\cu$, let
$R_V:\bU\times\bV\rightarrow[0,\infty)$ be a measurable function, for which
$dP_{UV}=R_Vd(P_U\otimes\lambda_V)$, and let
$\Omega^\prime:=\{\omega\in\Omega: 0 < \int_\bU R_V(u,V(\omega))P_U(du)< \infty\}$.
Then $\Omega^\prime\in\cf$, $\PR(\Omega^\prime)=1$, and
$P_{U|V}:\Omega\rightarrow\cp(\cu)$, defined by
\begin{equation}
P_{U|V}(A)
  = \indic_{\Omega^\prime}\frac{\int_A R_V(u,V)P_U(du)}{\int_\bU R_V(u,V)P_U(du)}
    + \indic_{\Omega\setminus\Omega^\prime} P_U(A),  \label{eq:absbay}
\end{equation}
is a {\em regular $V$-conditional distribution} for $U$. (See \cite{mine1} for
details.)

In many applications of Bayesian estimation, including nonlinear filtering, it is not
possible to express $P_{U|V}$ in terms of a finite number of statistics, and so it is
useful to construct approximations: $\Phat:\Omega\rightarrow\cq\subset\cp(\cu)$, where
$\Phat(A)$ is $V$-measurable for all $A$, and $\cq$ is of finite dimension.  Single
estimation objectives, such as minimum mean-square error in the estimate of a
real-valued quantity $f(U)$, induce their own specific measures of approximation error
on $\cp(\cu)$.  On the other hand, if $f$ is sufficiently regular, a more generic
measure of error such as the $L^2$ metric on densities may be useful.  If $\lambda_U$
is a {\em probability} measure, and $P_{U|V}(\omega)$ and $\Phat(\omega)$ have
densities $p_{U|V}(\omega)$ and $\phat(\omega)$ with respect to $\lambda_U$, then the
difference between the minimum mean-square error estimate of $f(U)$ and the mean of $f$
under $\Phat(\omega)$ can be bounded by means of the Cauchy-Schwartz inequality:
\begin{equation}
\left(\E_{P_{U|V}(\omega)}f-\E_{\Phat(\omega)}f\right)^2
   \le \E_{\lambda_U} f^2 \E_{\lambda_U}(p_{U|V}(\omega)-\phat(\omega))^2.
       \label{eq:csbnd}
\end{equation}
Although, in this context, the $L^2$ metric on densities bounds the estimation error,
it may still be poor in practice.  This is so, for example, if $f$ is the indicator
function of a rare, but important, event.  Moreover, we often need generic measures
of error that are suitable for a {\em variety} of objectives.  This is especially
important if the underlying estimation problem is inherently multi-objective.

Multi-objective measures of approximation error are discussed in \cite{newt5}.
One such measure is the Kullback-Leibler (KL) divergence (or ``relative entropy''):
\begin{equation}
\cd(Q\cond P)
  : = \left\{\begin{array}{l}
        \EP\frac{dQ}{dP}\log\frac{dQ}{dP} \quad{\rm if\ }Q\ll P \\
        +\infty \quad{\rm otherwise}.
      \end{array}\right.  \label{eq:kldiv}
\end{equation}
This is widely used in {\em variational} Bayesian estimation. (See, for example,
\cite{smqu1}.)  Apart from its use as a measure of approximation error, the
KL-divergence plays a central role in Shannon information theory.  The
{\em mutual information} between $U$ and $V$ is defined as follows \cite{coth1}:
\begin{equation}
I(U;V) := \cd(P_{UV}\cond P_U\otimes P_V) = \Eout\cd(P_{U|V}\cond P_U).
          \label{eq:mutinf}
\end{equation}
The term $\cd(P_{U|V}|P_U)$, here, can be interpreted as the {\em information gain} of
the posterior distribution $P_{U|V}$ over the prior $P_U$.

Suppose that $W:\Omega\rightarrow\bW$ is a second observation taking values in a
measure space $(\bW,\cw,\lambda_W)$ such that $V$ and $W$ are $U$-conditionally
independent and $P_{UW}\ll P_U\otimes\lambda_W$.  Let
$R_W:\bU\times\bW\rightarrow[0,\infty)$ be a measurable function for which
$dP_{UW} = R_W d(P_U\otimes\lambda_W)$, and let
$\Omega^{\prime\prime}:=\{\omega\in\Omega: 0 < \int_\bU R_W(u,W(\omega))
P_{U|V}(\omega)(du) < \infty\}.$  Then $\Omega^{\prime\prime}\in\cf$,
$\PR(\Omega^{\prime\prime})=1$, and $P_{U|VW}:\Omega\rightarrow\cp(\cu)$, defined by
\begin{equation}
P_{U|VW}(A)
  = \indic_{\Omega^{\prime\prime}}
    \frac{\int_A R_W(u,W)P_{U|V}(du)}{\int_\bU R_W(u,W)P_{U|V}(du)}
    + \indic_{\Omega\setminus\Omega^{\prime\prime}} P_{U|V}(A),  \label{eq:recbay}
\end{equation}
is a regular $(V,W)$-conditional distribution for $U$.  The mutual information
$I(U;(V,W))$ can be decomposed in the following way,
\begin{equation}
I(U;(V,W)) =  I(U;V) + \Eout I(U;W|V),  \label{eq:mutspl}
\end{equation}
where $I(U;W|V)$ is the {\em $V$-conditional mutual information} between $U$ and $W$,
\begin{equation}
I(U;W|V) := \cd(P_{UW|V}|P_{U|V}\otimes P_{W|V})
          = \Eout\left(\cd(P_{U|VW}\cond P_{U|V})\cond V\right). \label{eq:cmutinf}
\end{equation}
(The conditional mutual information is sometimes defined as the {\em average value}
of this quantity \cite{coth1}.)  Equation (\ref{eq:recbay}) can be used
{\em recursively} in estimation problems having sequences of conditionally independent
observations.  The information extracted from each observation in the sequence is then
associated with a ``local'' Bayesian problem, in which earlier observations enter only
through the ``local prior'' (the posterior derived from the earlier observations). The
decomposition (\ref{eq:mutspl}), (\ref{eq:cmutinf}) is valid whether or not $V$ and $W$
are $U$-conditionally independent.  However, in the absence of such conditional
independence it is not possible to interpret $\cd(P_{U|VW}\cond P_{U|V})$ as a
{\em local} information gain in this way.

Let $((X_t,Y_t),\,t\ge 0)$ be as described at the start of this section, and consider
the problem of estimating the {\em path} of $X$ from $Y$.  For any $0\le t\le s<\infty$,
let
\begin{equation}
Y_t^s := (Y_r-Y_t,\,r\in[t,s]).  \label{eq:yinc}
\end{equation}
Then $Y_0^t$ and $Y_t^s$ are $X$-conditionally independent, and we can use the above
methodology to identify the $Y_0^t$-conditional mutual information between $X$ and
$Y_t^s$, $I(X;Y_t^s|Y_0^t)$.  This is shown in section {\ref{se:qvar} to be related to
the quadratic variation of the posterior distribution in the Fisher metric.  The latter
is defined in terms of the mixed second derivative of the KL-divergence, and so
an appropriate state space for the nonlinear filter in this context is a subset of
$\cp(\cx)$ having a differentiable structure with respect to which the KL-divergence
admits such a derivative.  This is also desirable for the assessment of approximation
errors.  {\em Information Geometry} is the study of sets of probability measures having
such structures.

Information geometry is applied to nonlinear filtering in \cite{bhlg1} and the
references therein.  The posterior distributions for diffusion signal processes are
assumed, there, to have densities with respect to Lebesgue measure, whose square-roots
satisfy stochastic differential equations in $L^2(\R^m,\cb^m,{\rm Leb})$.  The induced
distance function between probability measures is the {\em Hellinger distance}.  The
coefficients of the filtering equation are projected in this sense onto the tangent
spaces of finite-dimensional exponential models, in order to obtain approximations to
filters. Information theoretic justification is given (when restricted to tangent
vectors corresponding to differentiable curves of square-root probability densities,
the $L^2$ norm corresponds to the Fisher metric), and comparisons are made with other
methods such as moment matching.  Although suitable for this purpose, the Hellinger
space cannot be used as an infinite-dimensional statistical manifold since the
KL-divergence is discontinuous at every point of it. (See the discussion at the end of
section 2 in \cite{newt4}.)  Furthermore, in common with the $L^2$ space of
densities, it has a boundary, which can create problems with numerical methods.

The local information gain of a nonlinear filter is connected with the notion of
{\em entropy production} in nonequilibrium statistical mechanics
\cite{mine2,newt2,newt3}.  The information geometric properties of nonlinear filters
are also, therefore, of interest in this context. 

The remainder of the paper is structured as follows.  Section \ref{se:infgeo} outlines
the main ingredients of information geometry and reviews the Hilbert manifold $M$,
which is used extensively in the sequel.  Section \ref{se:mvnlf} outlines a general
nonlinear filtering problem, and expresses the associated process of conditional
distributions as an It\^{o} process on $M$.  This allows the study of the quadratic
variation of the filter in the Fisher metric.  An $M$-valued evolution equation is
derived for a class of finite-dimensional exponential filters in section
\ref{se:finexp}.  The results of sections \ref{se:mvnlf} and \ref{se:finexp} are
formulated in terms of a set of hypotheses, some of which are not especially ripe.
Section \ref{se:examples} develops three examples in which they are satisfied.
Finally, section \ref{se:concl} makes some concluding remarks.

\section{Information Geometry} \label{se:infgeo}

We review the main ingredients of information geometry, by outlining the classical
finite-dimensional exponential model.  This is also used in section \ref{se:finexp}.
Let $(\bX,\cx,\mu)$ be a probability space on which are defined random variables
$(\xitil_i;\,i=1,\ldots,n)$ with the following properties: (i) the random variables
$(1,\xitil_1,\xitil_2,\ldots,\xitil_n)$ represent linearly independent elements of
$L^0(\mu)$, i.e.~$\mu(\alpha+\sum_i y^i\xitil_i=0)=1$ if and only if $\alpha=0$ and
$\R^n\ni y=0$;  (ii) $\Emu\exp(\sum_iy^i\xitil_i)<\infty$ for all $y$ in a
non-empty open subset $G\subseteq\R^n$.  For each $y\in G$, let $P_y$ be the
probability measure on $\cx$ with density
\[
\frac{dP_y}{d\mu} = \exp\left(\sum_iy^i\xitil_i-c(y)\right),
\]
where $c(y)=\log\Emu\exp(\sum_iy^i\xitil_i)$, and let $N:=\{P_y:\,y\in G\}$.  It
follows from (i) that the map $G\ni y\mapsto P_y\in N$ is a bijection.  Let
$\theta:N\rightarrow G$ be its inverse; then $(N,\theta)$ is an {\em exponential
statistical manifold} with an atlas comprising the single chart $\theta$.  We can think
of a {\em tangent vector} at $P\in N$ as being an equivalence class of differentiable
curves passing through $P$: two curves (expressed in coordinates),
$(\bfy(t)\in G:t\in(-\epsilon,\epsilon))$ and $(\bfz(t)\in G:t\in(-\epsilon,\epsilon))$
being equivalent at $P$ if $\bfy(0)=\bfz(0)=\theta(P)$ and $\dot{\bfy}(0)=\dot{\bfz}(0)$.
The {\em tangent space} at $P$, $T_PN$, is the linear space of all such tangent vectors,
and is spanned by the vectors $(\partial_i;\,i=1,\ldots,n)$, where $\partial_i$ is the
equivalence class containing the curve
$(\bfy_i(t):=\theta(P)+t\bfe_i,\,t\in(-\epsilon,\epsilon))$, and $\bfe_i^j$ is equal to
the Kr\"{o}necker delta.  The {\em tangent bundle} is the disjoint union
$TN := \cup_{P\in N}(P,T_PN)$, and admits the global chart
$\Theta:TN\rightarrow G\times\R^n$, where
$\Theta^{-1}(y,u):=(\theta^{-1}(y),u^i\partial_i)$.  If a function $f:N\rightarrow\R^k$
is differentiable, and $U\in T_PN$, then we write
\[
Uf = u^i\partial_i f
  := u^i\left.\frac{d}{dt}(f\circ\theta^{-1})(\bfy_i(t))\right|_{t=0}
   = u^i\frac{\partial (f\circ\theta^{-1})}{\partial y^i}(y),
\]
where $(y,u)=\Theta(P,U)=(\theta(P),U\theta)$, and we have used the Einstein summation
convention, that indices appearing once as a superscript and once as a subscript are
summed out.   

According to the Eguchi relations \cite{eguc1}, the mixed second derivative of the
KL-divergence defines the {\em Fisher metric} as a Riemannian metric on
$N$:  for any $P\in N$ and any $U,V\in T_PN$,
\[
\langle U, V \rangle_P := -UV\cd = g(P)_{i,j}u^iv^j,
\]
where $U$ and $V$ act on the first and second argument of $\cd$, respectively, and
\begin{equation}
g(P)_{i,j} := \langle \partial_i, \partial_j \rangle_P
  = \EP(\xitil_i-\EP\xitil_i)(\xitil_j-\EP\xitil_j),  \label{eq:fishmat}
\end{equation}
is the matrix form of the Fisher metric \cite{amar1}. The mixed third derivatives of
the KL-divergence define a pair of {\em covariant derivatives} on $N$ \cite{amar1}.
These give rise to notions of {\em curvature} of statistical manifolds, which are
important in the theory of asymptotic statistics \cite{amar1}.

The literature on information geometry is dominated by the study of finite-dimensional
manifolds of probability measures such as $(N,\theta)$.
The reader is referred to \cite{amar1,chen1} and the references
therein for further information.  In order to extend these ideas to infinite-dimensions
we need to choose a system of charts with respect to which the KL-divergence admits a
suitable number of derivatives.  It is clear from (\ref{eq:kldiv}) that the
smoothness properties of this divergence are closely connected with those of the
density $dQ/dP$ and its log (considered as elements of dual spaces of functions).  In
the series of papers \cite{cepi1,gipi1,piro1,pise1}, G.~Pistone and his
co-workers developed an infinite-dimensional exponential statistical manifold on an
abstract probability space $(\bX,\cx,\mu)$.  Probability measures in the manifold are
mutually absolutely continuous with respect to the reference measure $\mu$, and the
manifold is covered by the charts $s_P(Q)=\log dQ/dP-\EP\log dQ/dP$ for different
``patch-centric'' probability measures $P$.  These readily give $\log dQ/dP$ the
desired regularity, but require ranges that are subsets of exponential Orlicz
spaces in order to do the same for $dQ/dP$.  The exponential Orlicz manifold is a
natural extension of the finite-dimensional manifold $(N,\theta)$ described above;
it has a strong topology, under which the KL-divergence is of class $C^\infty$.

However, this approach is technically demanding and leads to manifolds that are larger
than needed in many applications.  Furthermore, the exponential Orlicz space is less
suited to the theory of stochastic differential equations than Hilbert space; the
latter is the natural setting for the $L^2$ theory of stochastic integration
\cite{daza1}.  An infinite-dimensional Hilbert manifold of ``finite-entropy''
probability measures, on which the KL-divergence is twice differentiable, is developed
in \cite{newt4}.  This uses a chart involving both the density, $dP/d\mu$, and
its log.  The finite entropy condition is natural in estimation problems where the
mutual information between the estimand and the observation is finite.  Banach
manifolds of finite-entropy measures, on which the KL-divergence admits higher
derivatives, are developed in \cite{newt5}.  That reference also develops
Hilbert and Banach manifolds of {\em finite measures} suitable for the ``un-normalised''
equations of nonlinear filtering.  We shall make extensive use of the Hilbert manifold
of \cite{newt4} in this paper; it is reviewed next.

\subsection{The Hilbert Manifold $M$} \label{se:manif}

For a probability space $(\bX, \cx, \mu)$, $M$ is the set of probability measures on
$\cx$ satisfying the following conditions:
\begin{enumerate}
\item[(M1)] $P$ is mutually absolutely continuous with respect to $\mu$;
\item[(M2)] $\Emu p^2 < \infty$;
\item[(M3)] $\Emu\log^2 p < \infty$.
\end{enumerate}
(We denote probability measures in $M$ by the upper-case letters $P$, $Q$, etc., and
their densities with respect to $\mu$ by the corresponding lower case letters, $p$,
$q$, etc.)  Let $\cl^0(\bX,\cx)$ be the set of real-valued random variables on $\bX$,
and $\cl^2(\bX,\cx,\mu)$ the subset of square-integrable random variables.  Let $H$ be
the Hilbert space of equivalence classes of centred elements of $\cl^2(\bX,\cx,\mu)$
(those having zero mean), and let $\Lambda:\cl^0(\bX,\cx)\rightarrow H$ be defined by
\begin{eqnarray}
\Lambda f & \ni &  f-\Emu f \quad{\rm if\ } f\in \cl^2(\bX,\cx,\mu), \nonumber \\
\Lambda f &  =  & 0 \quad {\rm otherwise}. \label{eq:lamdef}
\end{eqnarray}
Let $m,e:M\rightarrow H$ be defined as follows:
\begin{equation}
m(P)    = \Lambda p \quad {\rm and}\quad
e(P)    = \Lambda\log p.  \label{eq:medef}
\end{equation}
Variants of these are used in finite-dimensional information geometry as coordinate
maps for {\em mixture} and {\em exponential} models, respectively \cite{amar1}.
However, in the present context their images $m(M)$ and $e(M)$ are typically not open
subsets of $H$ \cite{newt4}; so, even though they are injective, $m$ and $e$ cannot be
used as charts for $M$.  On the other hand the map $\phi:M\rightarrow H$ defined by
their sum,
\begin{equation}
\phi(P) = m(P) + e(P) = \Lambda(p+\log p), \label{eq:phidef}
\end{equation}
is a bijection \cite{newt4}.  $(M,\phi)$ is a Hilbert manifold with an atlas comprising
a single chart.

Although not themselves charts, the maps $m$ and $e$ provide useful representations of
elements of $H$ since they are {\em bi-orthogonal}.  The simplest manifestation of
this property is the identity
\begin{equation}
\relpq + \relqp = \langle m(P)-m(Q), e(P)-e(Q)\rangle_H.
\end{equation}
It thus follows from the non-negativity of the KL-divergence that
\begin{equation}
\|m(P)-m(Q)\|_H^2 + \|e(P)-e(Q)\|_H^2 \le \|\phi(P)-\phi(Q)\|_H^2; \label{eq:phidif}
\end{equation}
in particular, $m\circ\phi^{-1}$ and $e\circ\phi^{-1}$ are Lipschitz continuous.
Furthermore
\begin{equation}
\relpq + \relqp \le \half \|\phi(P)-\phi(Q)\|_H^2.
\end{equation}
The inverse map $\phi^{-1}:H\rightarrow M$ is given by
\[
\frac{d\phi^{-1}(a)}{d\mu}(x) = \psi(\atil(x)+Z(a)),
\]
where $\psi:\R\rightarrow (0,\infty)$ is the inverse of the function
$(0,\infty)\ni z\mapsto z+\log z\in\R$, $\atil$ is any function in the equivalence class
$a$, and $Z:H\rightarrow\R$ is the unique function for which $\Emu\psi(\atil+Z(a))=1$
for all $a\in H$.  $Z$ is (Fr\'{e}chet) differentiable with derivative
\begin{equation}
DZ_au = -\frac{\Emu\psi^\prime(\atil+Z(a))\util}{\Emu\psi^\prime(\atil+Z(a))},
        \label{eq:Zder}
\end{equation}
where $\util$ is any function in the equivalence class $u$ \cite{newt4}.

A tangent vector $U$ at $P\in M$ is an equivalence class of differentiable curves at
$P$.  We denote the tangent space at $P$ by $T_PM$, and the tangent bundle by $TM$.
The latter admits the global chart $\Phi:TM\rightarrow H\times H$, where
$\Phi(P,U) = (\bfa(0),\dot{\bfa}(0))$, and $(\bfa(t),\,t\in(-\epsilon,\epsilon))$ is
any differentiable curve in the equivalence class $U$ (expressed in terms of the chart
$\phi$).  If $f:M\rightarrow Y$ is a map with range $Y$ (a Banach space) and the map
$f\circ\phi^{-1}:H\rightarrow Y$ is (Fr\'{e}chet) differentiable, then we write
\[
Uf := \left.\frac{d}{dt}(f\circ\phi^{-1})(\bfa(t))\right|_{t=0}
    = D(f\circ\phi^{-1})_au,
\]
where $(a,u)=\Phi(P,U)=(\phi(P),U\phi)$.  A weaker notion of {\em $d$-differentiability}
is defined in \cite{newt4}.  The map $f:M\rightarrow Y$ is $d$-differentiable
if, for any $P\in M$, there exists a continuous linear map
$d(f\circ\phi^{-1})_a:H\rightarrow Y$ such that
\[
\left.\frac{d}{dt}(f\circ\phi^{-1})(\bfa(t))\right|_{t=0} = d(f\circ\phi^{-1})_au,
\]
for all differentiable curves $\bfa$ in the equivalence class $U$.  We then write
$Uf=d(f\circ\phi^{-1})_au$.

The KL-divergence, $\cd:M\times M\rightarrow[0,\infty)$, is Fr\'{e}chet
differentiable in each argument, and both derivatives are $d$-differentiable in the
remaining argument \cite{newt4}.  We can use this fact, together with the Eguchi
relations \cite{eguc1}, to define the Fisher metric on $T_PM$: for any $P\in M$ and
$U,V\in T_PM$,
\begin{equation}
\langle U,V\rangle_P := -UV\cd
  = \Emu\frac{p}{(1+p)^2}(\util+DZ_au)(\vtil+DZ_av),  \label{eq:fisherm}
\end{equation}
where $U$ and $V$ act on the first and second arguments of $\cd$, respectively,
$a=\phi(P)$, $u=U\phi$, $v=V\phi$, $\util$ is any function in the equivalence
class $u$, and $\vtil$ is any function in the equivalence class $v$.
$(T_PM,\langle\fndot,\fndot\rangle_P)$ is an {\em inner product space}, whose norm
admits the bound: $\|U\|_P \le \|u\|_H$.  However, since the Fisher norm is not (in
general) equivalent to the model space norm, $(T_PM,\langle\fndot,\fndot\rangle_P)$ is
not a Hilbert space. $(M,\langle\fndot,\fndot\rangle_P)$ is a {\em pseudo}-Riemannian
manifold rather than a Riemannian manifold.

$H$-valued stochastic processes play a major role in what follows.  In order to
ensure that they have suitable measurability properties, we introduce the following
additional hypothesis.  This is satisfied, for example, if $\bX$ is a complete
separable metric (Polish) space, and $\cx$ is its Borel $\sigma$-algebra.
\begin{enumerate}
\item[(M4)] $H$ is separable.
\end{enumerate}

\begin{lemma}  \label{le:Pettis}
If $(\bZ,\cz)$ is a measurable space, and $f:\bZ\times\bX\rightarrow \R$ is
jointly measurable, then the map $\bZ\ni z\mapsto\Lambda f(z,\fndot)\in H$
is $\cz$-measurable.
\end{lemma}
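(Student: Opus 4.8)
The plan is to reduce the claim to the classical Pettis measurability theorem. Recall that $\Lambda f(z,\fndot) = f(z,\fndot) - \Emu f(z,\fndot)$ whenever $f(z,\fndot)\in\cl^2(\bX,\cx,\mu)$, and $\Lambda f(z,\fndot)=0$ otherwise. First I would observe that, since $H$ is separable by hypothesis (M4), a map $g:\bZ\rightarrow H$ is $\cz$-measurable if and only if it is \emph{weakly} measurable, i.e. $z\mapsto\langle g(z),\eta\rangle_H$ is $\cz$-measurable for every $\eta\in H$ (Pettis's theorem). So it suffices to show weak measurability of $z\mapsto\Lambda f(z,\fndot)$.

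The key step is to handle the set $\bZ_0:=\{z\in\bZ:\ f(z,\fndot)\in\cl^2(\bX,\cx,\mu)\}$, i.e. $\{z:\ \Emu f(z,\fndot)^2<\infty\}$, and show it lies in $\cz$. Here I would invoke Tonelli's theorem: since $(z,x)\mapsto f(z,x)^2$ is jointly measurable and nonnegative, the partial integral $z\mapsto\Emu f(z,\fndot)^2\in[0,\infty]$ is $\cz$-measurable, whence $\bZ_0\in\cz$. On $\bZ\setminus\bZ_0$ the map is identically $0$, trivially measurable. On $\bZ_0$, fix $\eta\in H$ and pick a representative $\etatil\in\cl^2(\bX,\cx,\mu)$; then
\[
\langle\Lambda f(z,\fndot),\eta\rangle_H
  = \Emu\big(f(z,\fndot)-\Emu f(z,\fndot)\big)\etatil
  = \Emu f(z,\fndot)\etatil,
\]
the last equality because $\etatil$ is centred. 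The integrand $(z,x)\mapsto f(z,x)\etatil(x)$ is jointly measurable, and for $z\in\bZ_0$ it is $\mu$-integrable in $x$ by Cauchy--Schwarz ($f(z,\fndot),\etatil\in\cl^2(\mu)$); so by Fubini's theorem $z\mapsto\Emu f(z,\fndot)\etatil$ is $\cz$-measurable on $\bZ_0$. Combining the two pieces (measurability on $\bZ_0$ and on its complement, with $\bZ_0\in\cz$) gives $\cz$-measurability of $z\mapsto\langle\Lambda f(z,\fndot),\eta\rangle_H$ for each $\eta$, and Pettis's theorem then upgrades this to strong $\cz$-measurability.

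The main obstacle is essentially bookkeeping rather than depth: one must be careful that the representative $\etatil$ can be chosen measurably and that the ``exceptional'' behaviour on $\bZ\setminus\bZ_0$ (where $\Lambda$ is defined to be $0$ rather than by the centring formula) does not spoil measurability — this is why isolating $\bZ_0\in\cz$ via Tonelli is the crucial move. No completeness or separability of $\bX$ itself is needed; only (M4), which guarantees the Pettis equivalence of weak and strong measurability into $H$.
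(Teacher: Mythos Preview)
Your proposal is correct and follows essentially the same route as the paper's proof: isolate the set $\bZ_0\in\cz$ via Tonelli, verify weak measurability of $z\mapsto\Lambda f(z,\fndot)$ using Fubini on the product $f(z,x)\etatil(x)$, and then invoke Pettis's theorem together with (M4) to upgrade to strong measurability. The only cosmetic difference is that you explicitly exploit the centring of $\etatil$ to simplify the inner product, whereas the paper phrases the Fubini step for a general $g\in\cl^2$.
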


\begin{proof}
Let $B:=\{z\in\bZ:f(z,\fndot)\in\cl^2(\bX,\cx,\mu)\}$ then, according to Tonelli's
theorem, $B\in\cz$.  Fubini's theorem shows that, for any $g\in\cl^2(\bX,\cx,\mu)$,
the function $\bZ\ni z\mapsto \indic_B(z)\Emu f(z,\fndot)g \in \R$ is $\cz$-measurable,
i.e.~the map $\bZ\ni z\mapsto\Lambda f(z,\fndot)\in H$ is {\em weakly} $\cz$-measurable.
The statement of the lemma follows from (M4) and Pettis's theorem.
\end{proof}
 
Under (M4), $H$ is of countable dimension, and so admits a complete orthonormal basis
$(\eta_i\in H,\,i\in\N)$.  An element $(P,U)\in TM$ thus has the coordinate
representation $((a^i,u^j),\,i,j\in\N)$ where $a^i=\langle \phi(P),\eta_i\rangle_H$ and
$u^j=\langle U\phi,\eta_j\rangle_H$.  In particular, any $U\in T_PM$ admits the
representation $U = u^jD_j$, where $(P,D_j)=\Phi^{-1}(\phi(P),\eta_j)$.
For any $P\in M$ and $i,j\in\N$, let
\begin{equation}
G(P)_{i,j} := \langle D_i, D_j\rangle_P.  \label{eq:cmetric}
\end{equation}
Then it follows from the domination of $\|U\|_P$ by $\|u\|_H$ that, for any
$U,V\in T_PM$,
\begin{equation}
\langle U,V\rangle_P = G(P)_{i,j}u^iv^j, \label{eq:metrep}
\end{equation}
in the sense that both series are absolutely convergent, and the result does not depend
on the order in which the limits are taken.

\section{The $M$-Valued Nonlinear Filter} \label{se:mvnlf}

We consider a general nonlinear filtering problem as outlined in section \ref{se:intro},
in which all random quantities are defined on a complete probability space
$(\Omega,\cf,\PR)$.  The signal space $\bX$ is a complete separable metric space,
$\cx$ is its Borel $\sigma$-algebra, and $\mu$ is a reference probability measure on
$\cx$.  $(M,\phi)$ is the associated Hilbert manifold, as described in section
\ref{se:manif}.  We shall assume that $X$ has right-continuous sample paths with left
limits at all $t\in(0,\infty)$, and that the distribution of $X_t$, $P_t$, has a
density with respect to $\mu$ satisfying the Kolmogorov forward equation
\begin{equation}
\frac{\partial p_t}{\partial t}
  = \ca_t p_t \quad{\rm for\ }t\in[0,\infty), \label{eq:kolfor}
\end{equation}
where $(\ca_t,\,t\ge 0)$ is a family of linear operators on an appropriate class
of functions $f:\bX\rightarrow\R$.

\begin{example} \label{ex:discsig}
$\bX=\{1,2,\ldots, m\}$, $X$ is a time-homogeneous Markov jump process with rate
matrix $A$, $\mu$ is mutually absolutely continuous with respect to the counting
measure (with Radon-Nikodym derivative $r$), and $h_t(=h)$ does not depend on $t$.
In this case
\[
(\ca_tp)(x)
  = (\ca p)(x)
  = \frac{1}{r(x)}\sum_{\xtil\in\bX}A_{\xtil x}rp(\xtil) \quad{\rm for\ all\ }t.
\]
\end{example}

\begin{example} \label{ex:diffsig}
$\bX=\R^m$, $X$ is a time-homogeneous multidimensional diffusion process with
suitably regular drift vector $b$ and diffusion matrix $a$, $\mu$ is mutually
absolutely continuous with respect to Lebesgue measure (with Radon-Nikodym
derivative $r$), and $h_t(=h)$ does not depend on $t$.  In this case
\[
\ca_tp
  =  \ca p
  =  \frac{1}{2r}\frac{\partial^2}{\partial x^i\partial x^j}(a^{ij}rp)
     - \frac{1}{r}\frac{\partial}{\partial x^i}(b^irp) \quad{\rm for\ all\ }t.
\]
\end{example}

The set-up is sufficiently general to include {\em path estimators}.

\begin{example} \label{ex:pathest}
$\bX=C([0,\infty);\R^m)$, $X_t=(\Xtil_s,\,s\ge 0)$ for all $t$, where $\Xtil$ is the
diffusion process of Example \ref{ex:diffsig}, and $h_t(x)=\htil(x_t)$ for some
$\htil:\R^m\rightarrow\R^d$.  In this case $\ca_t=0$ for all $t$.
\end{example}

Let $(\cy_t\subset\cf,\,t\ge 0)$ be the filtration generated by the observation
process $Y$, augmented by the $\PR$-null sets of $\cf$, and let $\cp_Y$ be the
$\sigma$-algebra of $(\cy_t)$-predictable subsets of $[0,\infty)\times\Omega$.
We assume that:
\begin{enumerate}
\item[(F1)] $P_0\in M$.
\item[(F2)] For any $T<\infty$, $\int_0^T\Eout|h_t(X_t)|^2dt< \infty$.
\item[(F3)] There exists, on the product space $\Omega\times\bX$, a
  $\cp_Y\times\cx$-measurable, $(0,\infty)$-valued process $(\pi_t,\, t\ge 0)$, for
  which
  \[
  \PR\left(\int_\bX\pi_t(\fndot,x)\mu(dx)=1 {\rm\ for\ all\ }t\ge 0\right) = 1.
  \]
  For any $t$ and any $A\in\cx$, $\PR(X_t\in A\cond \cy_t)=\Pi_t(A)$, where
  \begin{equation}
  \Pi_t(A) := \int_A\pi_t(\fndot,x)\mu(dx).  \label{eq:f22}
  \end{equation}
\item[(F4)] $\PR(\pi_t\in {\rm Dom}\ca_t {\rm\ for\ all\ }t\ge 0)=1$, the process
  $(\ca_t\pi_t,\,t\ge 0)$ is $\cp_Y\times\cx$-measurable and, for any $T<\infty$,
  \begin{equation}
  \PR\left(\int_0^T\sqrt{\Emu(1+\pi_t^{-1})^2(\ca_t\pi_t)^2}\,dt < \infty\right)
    = 1. \label{eq:f41}
  \end{equation}
\item[(F5)] For any $T<\infty$,
  \begin{eqnarray}
  \PR\left(\int_0^T\sqrt{\Emu|h_t-\hhbar_t|^4}\,dt < \infty\right)
    & = & 1, \label{eq:f43} \\
  \PR\left(\int_0^T\Emu(\pi_t+1)^2|h_t-\hhbar_t|^2dt < \infty\right)
    & = & 1, \label{eq:f42}
  \end{eqnarray}
  where
  \begin{equation}
  \hhbar_t := \left\{\begin{array}{l}
                \Emu\pi_t h_t \quad {\rm if\ }\Emu\pi_t|h_t| < \infty \\
                0 \quad {\rm otherwise}.
              \end{array} \right. \label{eq:hbardef}
  \end{equation}
\item[(F6)] For almost all $x$, $(\pi_t,\,t\ge 0)$ satisfies the following It\^{o}
  equation on $\Omega$:
  \begin{equation}
  \pi_t = p_0 + \int_0^t \ca_s\pi_s \,ds
           + \int_0^t \pi_s(h_s-\hhbar_s)^*d\nu_s, \label{eq:pifilt}
  \end{equation}
  where $(\nu_t,\,t\ge 0)$ is the {\em innovations process},
  \begin{equation}
  \nu_t := Y_t - \int_0^t \hhbar_s\,ds.  \label{eq:innov}
  \end{equation}
\end{enumerate}

\begin{remark} \label{re:Frem}
\begin{enumerate}
\item[(i)] Because of (F2), $(\nu_t,\,t\ge 0)$ is a $d$-dimensional $(\cy_t)$-Brownian
  motion \cite{lish1}.
\item[(ii)] In the context of Example \ref{ex:discsig}, (\ref{eq:pifilt}) becomes a
  system of stochastic ordinary differential equations derived independently by Wonham
  \cite{wonh1} and Shiryayev \cite{shir1}.  In the context of Example \ref{ex:diffsig},
  it becomes a stochastic partial differential equation known as the Kushner-Stratonovich
  equation \cite{kush1}. See \cite{crro1} for a variety of conditions under
  which nonlinear filters admit the representation (\ref{eq:pifilt}).
\end{enumerate}
\end{remark}

The intention here is to develop an $M$-valued representation for the process $\Pi$
of (\ref{eq:f22}). With this in mind, we introduce the following $H$-valued processes
\begin{equation}
u_t     := \Lambda(1+\pi_t^{-1})\ca_t\pi_t \quad{\rm and}\quad
\zeta_t := \half\Lambda|h_t-\hhbar_t|^2, \label{eq:uzetdef}
\end{equation}
where $\Lambda$ is as defined in (\ref{eq:lamdef}), and the following
$L(\R^d,H)$-valued process
\begin{equation}
v_t := \Lambda(\pi_t+1)(h_t-\hhbar_t)^*.  \label{eq:vdef}
\end{equation}

\begin{proposition} \label{pr:mvalfil}
Suppose that $(X,Y)$ satisfies (F1--F6), and $\Pi$, $u$, $\zeta$ and $v$ are as defined
in (\ref{eq:f22}), (\ref{eq:uzetdef}) and (\ref{eq:vdef}).  Then
\begin{enumerate}
\item[(i)] $\PR(\Pi_t\in M {\rm\ for\ all\ }t\ge 0)=1$;
\item[(ii)] $(\phi(\Pi_t),\,t\ge 0)$ satisfies the following (infinite-dimensional)
  It\^{o} equation
  \begin{equation}
  \phi(\Pi_t) = \phi(P_0) + \int_0^t(u_s-\zeta_s)\,ds
                + \int_0^t v_s\,d\nu_s. \label{eq:hfilt}
  \end{equation}
\end{enumerate}
\end{proposition}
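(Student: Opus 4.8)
The plan is to show that $\Pi_t=\phi^{-1}(a_t)$ for all $t\ge0$, simultaneously and almost surely, where $a_t\in H$ denotes the right-hand side of~(\ref{eq:hfilt}),
\[
a_t:=\phi(P_0)+\int_0^t(u_s-\zeta_s)\,ds+\int_0^t v_s\,d\nu_s,
\]
read as an $H$-valued Bochner integral plus an $H$-valued It\^o integral. Since $\phi^{-1}$ maps $H$ into $M$, this identification gives~(i) (and, in particular, shows that $\Pi_t$ satisfies (M1)--(M3)), while composing with the chart $\phi$ gives~(ii). There are thus two tasks: to check that $(a_t,\,t\ge0)$ is a well-defined continuous $H$-valued process, and to identify it with $(\phi(\Pi_t),\,t\ge0)$.

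For the first task, Lemma~\ref{le:Pettis}, together with the $\cp_Y\times\cx$-measurability assumed in (F3)--(F5) (and Tonelli's theorem for $\hhbar$), shows that $u$ and $\zeta$ are $\cp_Y$-measurable $H$-valued processes and that $v$ is a $\cp_Y$-measurable process taking values in the Hilbert--Schmidt operators from $\R^d$ to $H$. The contraction bound $\|\Lambda f\|_H\le\|f\|_{\cl^2(\bX,\cx,\mu)}$ gives $\|u_s\|_H\le\sqrt{\Emu(1+\pi_s^{-1})^2(\ca_s\pi_s)^2}$, $\|\zeta_s\|_H\le\half\sqrt{\Emu|h_s-\hhbar_s|^4}$, and $\sum_{k=1}^d\|\Lambda(\pi_s+1)(h_s^k-\hhbar_s^k)\|_H^2\le\Emu(\pi_s+1)^2|h_s-\hhbar_s|^2$, so that~(\ref{eq:f41}),~(\ref{eq:f43}) and~(\ref{eq:f42}) respectively make $s\mapsto u_s-\zeta_s$ almost surely Bochner integrable on bounded intervals and $v$ almost surely stochastically integrable against the $d$-dimensional $(\cy_t)$-Brownian motion $\nu$ (which is a Brownian motion by (F2); see Remark~\ref{re:Frem}). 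The It\^o integral is then the finite sum $\sum_{k=1}^d\int_0^t v_s^k\,d\nu_s^k$, and $(a_t)$ is a well-defined $H$-valued process with continuous paths.

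To carry out the identification, write $\gamma(z):=z+\log z$, so that $\phi(P)=\Lambda\gamma(p)$ and $\psi=\gamma^{-1}$. A Fubini argument (deterministic for the $ds$-integral, stochastic for the $d\nu_s$-integral, and using the integrability just established) shows that $a_t=\Lambda\xi_t$, where
\[
\xi_t(x):=\gamma(p_0(x))+\int_0^t(1+\pi_s(x)^{-1})\ca_s\pi_s(x)\,ds-\half\int_0^t|h_s(x)-\hhbar_s|^2\,ds+\int_0^t(\pi_s(x)+1)(h_s(x)-\hhbar_s)^{*}\,d\nu_s,
\]
and in particular $\xi_t\in\cl^2(\bX,\cx,\mu)$. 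On the other hand, for $\mu$-almost every $x$ the scalar process $t\mapsto\pi_t(x)$ is $(0,\infty)$-valued (by (F3)) and satisfies the It\^o equation~(\ref{eq:pifilt}); applying the one-dimensional It\^o formula to $t\mapsto\gamma(\pi_t(x))$ --- using $\gamma'(z)=1+z^{-1}$, $\gamma''(z)=-z^{-2}$, and the fact that the martingale part of~(\ref{eq:pifilt}) has quadratic variation with density $\pi_s(x)^2|h_s(x)-\hhbar_s|^2$ --- one finds that the second-order It\^o term is precisely the third integral in $\xi_t$, so that $\gamma(\pi_t(x))=\xi_t(x)$ for all $t$, almost surely (the integrability of the new integrands following from (F4)--(F5) by Cauchy--Schwarz and Tonelli). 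Therefore $a_t=\Lambda\gamma(\pi_t)$, so $\phi^{-1}(a_t)$ has $\mu$-density $\psi\big(\gamma(\pi_t(x))-\Emu\gamma(\pi_t)+Z(a_t)\big)$; and since $\Emu\psi(\gamma(\pi_t))=\Emu\pi_t=1$ by (F3), uniqueness of the normalising constant $Z(a_t)$ forces $Z(a_t)=\Emu\gamma(\pi_t)$. Hence $\phi^{-1}(a_t)$ has density $\pi_t$, i.e.\ $\phi^{-1}(a_t)=\Pi_t$; path-continuity of $a_\cdot$ and of $t\mapsto\pi_t(x)$ lets the exceptional null set be taken independent of $t$, and (i),~(ii) follow.

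The step I expect to demand the most care is this (stochastic) Fubini identification $a_t=\Lambda\xi_t$: one must verify that the natural $\cl^2(\mu)$ representatives of $u_s$, $\zeta_s$, $v_s$ and $\phi(P_0)$ are jointly measurable in all of their arguments and square-integrable for almost every $s$, that the pointwise-in-$x$ ordinary and stochastic integrals defining $\xi_t(x)$ exist for $\mu$-almost every $x$ (via Tonelli and the bounds of the second paragraph), and that the resulting function of $x$ represents the correct element of $H$ --- equivalently, that pairing with an arbitrary $g\in H$ commutes with the $ds$-, $d\nu_s$- and $\mu$-integrations. The remaining ingredients (the measurability bookkeeping, the one-dimensional It\^o computation, the normalisation argument, and the passage from rational to all $t$) are routine given (F1)--(F6).
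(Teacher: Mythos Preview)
Your proposal is correct and follows essentially the same strategy as the paper's proof: apply the scalar It\^o formula to $\pi_t+\log\pi_t$ pointwise in $x$, verify via Lemma~\ref{le:Pettis} and (F4)--(F5) that the $H$-valued integrals defining $a_t$ make sense, and then identify $a_t$ with $\Lambda(\pi_t+\log\pi_t)$ by a (stochastic) Fubini argument tested against an orthonormal basis of $H$. The only cosmetic differences are that the paper deduces part~(i) directly from the elementary inequality $\pi_t^2+\log^2\pi_t\le(\pi_t+\log\pi_t)^2+2/e$ rather than via your $\phi^{-1}$/normalisation route, and it spells out the localisation you correctly flag as delicate (stopping times $\tau_m:=\inf\{t:\int_0^t\Emu|\vtil_s|^2\,ds\ge m\}$ together with Doob's $L^2$ inequality) to control the pointwise stochastic integral and justify the stochastic Fubini step.
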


\begin{remark} \label{re:idint}
\begin{enumerate}
\item[(i)] The first integral in (\ref{eq:hfilt}) is a Bochner integral, and the second
  is an It\^{o} integral.  The stochastic calculus of Hilbert-space-valued
  semimartingales is developed pedagogically in \cite{daza1}.  In the general case,
  the stochastic integral is defined for a Hilbert-space-valued Wiener process and
  the stochastic integrand is a Hilbert-Schmidt-operator-valued process. In the present
  context $\nu$ is of finite dimension, and so, if $\R^d$ is equipped with the
  Euclidean inner product, {\em any} element of $L(\R^d,H)$ is a Hilbert-Schmidt
  operator.  For any $\rho,\sigma\in L(\R^d,H)$, the associated inner product is
  \begin{equation}
  \langle\rho,\sigma\rangle_{HS}
    := \sum_{k=1}^d\langle\rho \bfe_k,\sigma \bfe_k\rangle_H, \label{eq:hsinprod}
  \end{equation}
  where $(\bfe_k,\,1\le k\le d)$ is any orthonormal basis in $\R^d$.
\item[(ii)] Although natural and inclusive, (F3--F6) are not particularly ripe.  We
  develop some examples in which they are satisfied in section \ref{se:examples}.
\item[(iii)] The case $h=0$ provides an $M$-valued representation for the marginal
  distribution $(P_t,\,t\ge 0)$.
\end{enumerate}
\end{remark}

\begin{proof}
According to (F6) there exists an $F\in\cx$ with $\mu(F)=1$ such that $\pi(\fndot,x)$
satisfies (\ref{eq:pifilt}) on $\Omega$ for all $x\in F$.  It\^{o}'s rule shows that,
for any such $x$, $(\pi_t+\log\pi_t,\,t\ge 0)$ satisfies the following It\^{o} equation
on $\Omega$:
\begin{equation}
\pi_t+\log\pi_t = p_0+\log p_0 + I_t + J_t,  \label{eq:pplp}
\end{equation}
where
\begin{eqnarray}
I_t & := & \int_0^t(\util_s-\zetatil_s)\,ds,
           \quad \util_t:=\indic_F(1+\pi_t^{-1})\ca_t\pi_t,
           \quad \zetatil_t := \half\indic_F|h_t-\hhbar_t|^2, \nonumber \\
J_t & := & \int_0^t \vtil_sd\nu_s,
           \quad\qquad\ \vtil_t:=\indic_F(\pi_t+1)(h_t-\hhbar_t)^*.
           \label{eq:IJdef}
\end{eqnarray}
The Fubini theorem and Cauchy-Schwartz inequality show that, for any
$T<\infty$,
\begin{eqnarray*}
\Emu\left(\int_0^T|\util_t-\zetatil_t|\,dt\right)^2
  &   =   & \int_0^T\int_0^T\Emu|\util_t-\zetatil_t||\util_s-\zetatil_s| \,ds\,dt \\
  &  \le  & \left(\int_0^T\sqrt{\Emu(\util_t-\zetatil_t)^2}\,dt\right)^2,
\end{eqnarray*}
and so it follows from (\ref{eq:f41}) and (\ref{eq:f43}) that
\begin{equation}
\PR\big(I_t\in\cl^2(\bX,\cx,\mu){\rm \ for\ all\ }t\ge 0\big) = 1.  \label{eq:Iinl2}
\end{equation}
For any $m\in\N$ and any $T<\infty$, let
\begin{equation}
\tau_m:=\inf\{t>0:\,K_t\ge m\}\wedge T
  \quad{\rm where}\quad K_t := \int_0^t\Emu|\vtil_s|^2\,ds. \label{eq:taukdef}
\end{equation}
According to (\ref{eq:f42}), $\PR(K_T<\infty)=1$, $\tau_m$ is a $(\cy_t)$-stopping time,
and $(J_{t\wedge\tau_m}, t\ge 0)$ is a continuous martingale on $\Omega$, for almost all
$x$. According to the stochastic Fubini theorem (Theorem 4.18 in \cite{daza1}),
$J_{t\wedge\tau_m}$ is $\PR\otimes\mu$-measurable for each $t$, and so
$\sup_{[0,T]}J_{t\wedge\tau_m}^2$ is also $\PR\otimes\mu$-measurable. 
Applying Doob's $L^2$ inequality and then integrating with respect to $\mu$, we obtain
\[
\Emu\Eout\sup_{t\in[0,T]} J_{t\wedge\tau_m}^2 \le 4\Emu\Eout J_{\tau_m}^2
  = 4\Eout K_{\tau_m} \le 4m.
\]
Now $\tau_m=T$ for all $m\ge K_T$, and so
\begin{equation}
\PR\left(\Emu\sup_{t\in[0,T]} J_t^2 < \infty\right) = 1; \label{eq:supJbnd}
\end{equation}
in particular,
\begin{equation}
\PR\big(J_t\in\cl^2(\bX,\cx,\mu){\rm \ for\ all\ }t\ge 0\big)=1.  \label{eq:Jinl2}
\end{equation}
Now $\inf_{y\in(0,\infty)}y\log y=-1/e$, and so
$\pi_t^2 + \log^2\pi_t \le (\pi_t+\log\pi_t)^2 + 2/e$.  Part (i) thus follows from
(F1), (\ref{eq:pplp}), (\ref{eq:Iinl2}) and (\ref{eq:Jinl2}), as does the fact that
\[
\PR\big(\phi(\Pi_t)=\phi(P_0)+\Lambda I_t+\Lambda J_t{\rm\ for\ all\ }t\big) = 1.
\]

According to (F3), (F4) and Lemma \ref{le:Pettis}, $\phi(\Pi)$, $\hhbar$, $u$, $\zeta$
and $v$ are all $\cp_Y$-measurable. Furthermore, it follows from
(\ref{eq:f41}--\ref{eq:f42}) that
\[
\PR\left(\int_0^t\|u_s-\zeta_s\|_H\,ds+\int_0^t\|v_s\|_{HS}^2\,ds < \infty
  {\rm \ for\ all\ }t\ge 0\right) = 1,
\]
and so the integrals on the right-hand side of (\ref{eq:hfilt}) are well defined.
(See section 4.2 in \cite{daza1}.)  It remains to show that the operator
$\Lambda$ commutes with the operations of ordinary and stochastic integration in
(\ref{eq:IJdef}).

Let $(a_t,\,t\ge 0)$ be the (continuous) $H$-valued process on the right-hand
side of (\ref{eq:hfilt}), let $(\eta_i,\,i\in\N)$ be a complete orthonormal basis for
$H$ and, for each $i$, let $\etatil_i\in\cl^2(\bX,\cx,\mu)$ be a function in the
equivalence class $\eta_i$.  It follows from the infinite-dimensional It\^{o} rule
(Theorem 4.17 in \cite{daza1}) that
\begin{eqnarray}
\langle\eta_i,a_t\rangle_H
  & = & \langle\eta_i,\phi(P_0)\rangle_H
        + \int_0^t\langle\eta_i,u_s-\zeta_s\rangle_H\,ds
        + \int_0^t\langle\eta_i,v_sd\nu_s\rangle_H \nonumber \\
  & = & \Emu\etatil_i(p_0+\log p_0)
        + \int_0^t\Emu\etatil_i(\util_s-\zetatil_s)\,ds
        + \int_0^t\Emu\etatil_i\vtil_s\,d\nu_s,\qquad \label{eq:compexp}
\end{eqnarray}
where, in the second step, we have used the fact (derived from
(\ref{eq:f41}--\ref{eq:f42})) that
$\util_t, \zetatil_t, \vtil_ty \in\cl^2(\bX,\cx,\mu)$ for all $y\in\R^d$, for almost all
$(t,\omega)$.  The Cauchy-Schwartz inequality shows that, for any $m\in\N$ and
any $T<\infty$,
\[
\Emu\left(\Eout\int_0^{\tau_m}|\etatil_i\vtil_s|^2\,ds\right)^{1/2}
   =   \Emu|\etatil_i|\left(\Eout\int_0^{\tau_m}|\vtil_s|^2\,ds\right)^{1/2}
  \le  \sqrt{\Eout K_{\tau_m}} \le \sqrt{m},
\]
where $\tau_m$ and $K_t$ are as defined in (\ref{eq:taukdef}), and so, according to
the stochastic Fubini theorem (Theorem 4.18 in \cite{daza1})  
\[
\int_0^{t\wedge\tau_m}\Emu\etatil_i\vtil_s \,d\nu_s
  = \Emu\etatil_i J_{t\wedge\tau_m}.
\]
Since $m$ and $T$ are arbitrary, this is also true if $t\wedge\tau_m$ is replaced by
any $t\ge 0$.  According to (\ref{eq:supJbnd}) and the dominated convergence theorem
$\Lambda J$ is continuous; so
\[
\PR\left(\int_0^t\Emu\etatil_i\vtil_s \,d\nu_s = \Emu\etatil_i J_t
  {\rm\ for\ all\ }t\ge 0\right) = 1.
\]
Applying this and the Fubini theorem to (\ref{eq:compexp}), we obtain
\begin{eqnarray*}
\langle\eta_i,a_t\rangle_H
  & = & \Emu\etatil_i(p_0+\log p_0) + \Emu\etatil_i I_t + \Emu\etatil_i J_t \\
  & = & \Emu\etatil_i(\pi_t+\log \pi_t) \\
  & = & \langle\eta_i,\phi(\Pi_t)\rangle_H,
\end{eqnarray*}
where we have used (\ref{eq:pplp}) in the second step, and part (i) in the third step.
This completes the proof of part (ii).
\end{proof}

\subsection{Quadratic Variation} \label{se:qvar}

Suppose that (F1--F6) hold, and let $(\phi(\Pi_t)^i,\,i\in\N)$ be the coordinate
representation for $\Pi_t$ in terms of a complete orthonormal basis,
$(\eta_i,\,i\in\N)$, for $H$.  Then $\phi(\Pi_t)^i$ is equal to the right-hand
side of (\ref{eq:compexp}).  The components $\{(\phi(\Pi_t)^i,\cy_t),\,i=1,2,\ldots\}$
form a system of real-valued semimartingales with quadratic co-variations
\begin{equation}
[\phi(\Pi)^i,\phi(\Pi)^j]_t
  = \int_0^t \sum_k\langle \eta_i,v_se_k\rangle_H\langle \eta_j,v_se_k\rangle_H\,ds.
    \label{eq:covar}
\end{equation}
We define the {\em $M$-intrinsic quadratic variation} of the process $\Pi$ (its
quadratic variation in the Fisher metric) as follows:
\begin{eqnarray}
[\Pi]_t
  & := & \int_0^t G(\Pi_s)_{i,j} \,d[\phi(\Pi)^i,\phi(\Pi)^j]_s \nonumber \\
  &  = & \int_0^t \sum_k G(\Pi_s)_{i,j}\langle \eta_i,v_se_k \rangle_H
         \langle \eta_j,v_se_k \rangle_H\,ds \nonumber \\
  &  = & \int_0^t \sum_k \Emu\frac{\pi_s}{(1+\pi_s)^2}
         (\vtil_s e_k + DZ_{\phi(\Pi_s)}v_se_k)^2  \,ds \nonumber \\
  &  = & \int_0^t \Emu\pi_s|h_s-\hhbar_s|^2\,ds, \label{eq:rqvfm}
\end{eqnarray}
where $G$ is as defined in (\ref{eq:cmetric}), and we have used (\ref{eq:metrep}) and
(\ref{eq:fisherm}) in the third step, and (\ref{eq:Zder}) in the final step.  The
final integrand here is the $\cy_s$-conditional mean-square error for the filter's
estimate of $h_s(X_s)$.  The average value of the final integral is known to be related
to the mutual information $I(X;Y_0^t)$ \cite{dunc1}.  We refine this result here,
characterising the $\cy_t$-conditional variant of this mutual information, as defined
in (\ref{eq:cmutinf}).

\begin{proposition} \label{pr:qvar}
For any $0\le t\le s<\infty$,
\begin{equation}
I(X;Y_t^s|Y_0^t) = \half\Eout([\Pi]_s-[\Pi]_t\cond\cy_t), \label{eq:qvsup}
\end{equation}
where $Y_t^s$ is as defined in (\ref{eq:yinc}).
\end{proposition}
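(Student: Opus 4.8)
The plan is to recognise (\ref{eq:qvsup}) as a $\cy_t$-conditional version of the classical identity of Duncan \cite{dunc1}, obtained through the local Bayesian problem attached to the observation increment $Y_t^s$. First I would put the right-hand side in estimation-theoretic form: by (\ref{eq:rqvfm}), $[\Pi]_s-[\Pi]_t=\int_t^s\Emu\pi_r|h_r-\hhbar_r|^2\,dr$, and by (F3) the integrand equals, for a.e.\ $(r,\omega)$, the $\cy_r$-conditional mean-square error $\Eout(|h_r(X_r)-\hhbar_r|^2\cond\cy_r)$ of the filter's estimate $\hhbar_r=\Eout(h_r(X_r)\cond\cy_r)$ of $h_r(X_r)$ (with $\Emu\pi_r|h_r|<\infty$ a.e.\ by (F2)); so, by Fubini and the tower property,
\begin{equation}
\half\Eout\bigl([\Pi]_s-[\Pi]_t\cond\cy_t\bigr)
   = \half\int_t^s\Eout\bigl(|h_r(X_r)-\hhbar_r|^2\cond\cy_t\bigr)\,dr.
     \label{eq:qvplan-rhs}
\end{equation}
On the other side I would take $U=X$, $V=Y_0^t$, $W=Y_t^s$ in the framework of section \ref{se:intro}: $V$ and $W$ are $X$-conditionally independent, and $\sigma(Y_0^t)$ and $\sigma(Y_0^t,Y_t^s)$ generate $\cy_t$ and $\cy_s$ (up to $\PR$-null sets, since $Y_0=0$), so (\ref{eq:cmutinf}) reads
\begin{equation}
I(X;Y_t^s\cond Y_0^t)=\Eout\bigl(\cd(P_{X|\cy_s}\cond P_{X|\cy_t})\cond\cy_t\bigr).
  \label{eq:qvplan-cmut}
\end{equation}

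The crux is then to evaluate the right-hand side of (\ref{eq:qvplan-cmut}). The key point is that, conditionally on $\cy_t$, the pair $(X,(Y_{t+u}-Y_t)_{0\le u\le s-t})$ is again a problem of the form (\ref{eq:obsproc}) on $[t,s]$: one has $Y_{t+u}-Y_t=\int_0^u h_{t+r}(X_{t+r})\,dr+(B_{t+u}-B_t)$, and $(B_{t+u}-B_t)_{u\ge 0}$ is, given $\cy_t$, a Brownian motion independent of $X$, because it is independent of $\sigma(X)\vee\sigma(B_r:\,r\le t)\supseteq\sigma(X)\vee\cy_t$. Its local prior is the regular $\cy_t$-conditional law of $X$; by (F3--F6) its filter is well defined with density $\pi$; and by (F2) its observation energy $\Eout(\int_t^s|h_r(X_r)|^2\,dr\cond\cy_t)$ is a.s.\ finite, which also makes the conditional mutual information finite. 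Duncan's argument then applies to this conditional problem. Concretely, with $Z^{t,s}:=\exp\bigl(\int_t^s h_r(X_r)^*\,dY_r-\half\int_t^s|h_r(X_r)|^2\,dr\bigr)$ and $\hat Z^{t,\fndot}$ the normalisation of the local unnormalised filter (so $d\hat Z^{t,r}=\hat Z^{t,r}\hhbar_r^*\,dY_r$ on $[t,s]$, $\hat Z^{t,t}=1$, by Kallianpur--Striebel), one has $\cd(P_{X|\cy_s}\cond P_{X|\cy_t})=\Eout(\log Z^{t,s}-\log\hat Z^{t,s}\cond\cy_s)$. Writing $dY_r=h_r(X_r)\,dr+dB_r$ in the first term and $dY_r=\hhbar_r\,dr+d\nu_r$ in the second, and then applying $\Eout(\fndot\cond\cy_t)$, annihilates the stochastic integrals --- $\int_t^\fndot h_r(X_r)^*\,dB_r$ is a martingale for the signal-and-noise filtration and $\int_t^\fndot\hhbar_r^*\,d\nu_r$ is a $(\cy_r)$-martingale (Remark \ref{re:Frem}(i)), both vanishing at $t$, with $L^2$ bounds from (F2) --- leaving, after Fubini,
\[
\Eout\bigl(\cd(P_{X|\cy_s}\cond P_{X|\cy_t})\cond\cy_t\bigr)
   = \half\int_t^s\Eout\bigl(|h_r(X_r)|^2-|\hhbar_r|^2\cond\cy_t\bigr)\,dr.
\]

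To conclude, since $\hhbar_r=\Eout(h_r(X_r)\cond\cy_r)$, the conditional-variance identity gives $\Eout(|h_r(X_r)|^2-|\hhbar_r|^2\cond\cy_r)=\Eout(|h_r(X_r)-\hhbar_r|^2\cond\cy_r)$; taking $\Eout(\fndot\cond\cy_t)$ and using the tower property (since $\cy_t\subseteq\cy_r$) turns the last display into the right-hand side of (\ref{eq:qvplan-rhs}), which is (\ref{eq:qvsup}). The part I expect to require the most care is the rigour of the conditional Kallianpur--Striebel step: the local prior $P_{X|\cy_t}$ is itself random, so one must pass to regular $\cy_t$-conditional probabilities and check the measurability in $\omega$ of the resulting identities (where (M4) and Lemma \ref{le:Pettis} are the relevant tools), as well as the true-martingale property of $Z^{t,s}$ and $\hat Z^{t,s}$, which (F2--F6) underpin. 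As a sanity check, taking $\Eout$ of (\ref{eq:qvsup}) and combining the decomposition (\ref{eq:mutspl}) with the unconditional identity $I(X;Y_0^u)=\half\Eout[\Pi]_u$ --- Duncan's theorem itself, via (\ref{eq:rqvfm}) --- recovers the statement in expectation.
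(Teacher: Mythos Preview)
Your proposal is correct and follows essentially the same route as the paper: both compute $\cd(P_{X|\cy_s}\cond P_{X|\cy_t})$ as the $\cy_t$-conditional expectation of a Girsanov log-density ratio, with the stochastic-integral terms vanishing under $\Eout(\fndot\cond\cy_t)$. The only cosmetic difference is that the paper invokes Theorem~7.23 of \cite{lish1} to write the density of $P_{X|Y_0^t}$ with respect to $P_X$ directly as an exponential in $(h_r-\hhbar_r)^*dB_r$ and $|h_r-\hhbar_r|^2$, so that $\log(\rho_s/\rho_t)$ already contains $\half\int_t^s|h_r(X_r)-\hhbar_r|^2\,dr$, whereas your $Z^{t,s}/\hat Z^{t,s}$ split first yields $|h_r(X_r)|^2-|\hhbar_r|^2$ and then needs the conditional-variance identity to close.
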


\begin{proof}
Let $D([0,\infty);\bX)$ be the Skorohod space of right-continuous, left-limit maps
$\theta:[0,\infty)\rightarrow\bX$, and let
$H:[0,s]\times D([0,\infty);\bX)\rightarrow\R^d$ be defined by
\[
H_t(\theta) = \left\{\begin{array}{l}
   h_t(\theta_t) \quad{\rm if\ } \int_0^s|h_t(\theta_t)|^2dt < \infty \\
   0 \quad {\rm otherwise}. \end{array}\right.
\]
Fubini's theorem and (F2) show that $\PR(H_r(X)=h_r(X_r){\rm\ for\ all\ }r\in[0,s])=1$. 
Let $\rho:[0,s]\times\Omega\times D([0,\infty);\bX)\rightarrow[0,\infty)$ be defined by
\[
\rho_t(\fndot,\theta)
  = \exp\left(\int_0^t(H_r(\theta)-\hhbar_r)^*dB_r
      + \half\int_0^t|H_r(\theta)-\hhbar_r|^2dr\right),
\]
let $\cs$ be the Borel $\sigma$-algebra on $D([0,\infty);\bX)$, and let $P_X\in\cp(\cs)$
be the distribution of $X$.  Theorem 7.23 of \cite{lish1} shows that there exists
a regular $Y_0^t$-conditional distribution $P_{X|Y_0^t}:\Omega\rightarrow\cp(\cs)$, whose
density with respect to $P_X$ is $\rho_t$.  (The filtration $(\cf_t)$ of Theorem 7.23 is
here $\cf_t=X^{-1}(\cs)\vee\cy_t$.) So, from (\ref{eq:cmutinf}),
\begin{eqnarray*}
I(X;Y_t^s\cond \cy_t)
  & = & \Eout\left(\cd(P_{X|Y_0^s}\cond P_{X|Y_0^t})\cond\cy_t\right) \\
  & = & \Eout\left(\log\frac{\rho_s(\fndot,X)}{\rho_t(\fndot,X)}\,\big|\,\cy_t\right) \\
  & = & \half\Eout\left(\int_t^s|H_r(X)-\hhbar_r|^2 dr\,\big|\,\cy_t\right) \\
  & = & \half\Eout\left(\int_t^s\int_{D([0,\infty);\bX)}|h_r(\theta_r)-\hhbar_t|^2
        P_{X|Y_0^r}(d\theta)\,dr\,\big|\,\cy_t\right),
\end{eqnarray*}
which completes the proof.
\end{proof}

The solutions of stochastic differential equations driven by continuous semimartingales
are typically non-differentiable.  This is an important property of nonlinear filters
in continuous time. Over a {\em short} time interval $[t,s]$ the observation
process $Y$ introduces the small quantity of new information (\ref{eq:qvsup}) to the
filter. If the filter process $\Pi$ were differentiable, then the filter would ``know''
$\Pi_s$ to first-order accuracy at time $t$, thereby contradicting the novelty
of the information.  Proposition \ref{pr:qvar} takes this intuition further by
connecting the infinitesimal information gain with the quadratic variation of the
filter.

\section{Finite-Dimensional Exponential Filters} \label{se:finexp}

Let $(N,\theta)$ be the finite-dimensional exponential manifold outlined in section
\ref{se:infgeo}, with the stronger conditions that $\Emu\xitil_i^2<\infty$ for all $i$,
and $\Emu\exp(2\sum_iy^i\xitil_i)<\infty$ for all $y\in G$; let $\xi_i:=\Lambda\xitil_i$
for $i=1,\ldots,n$.  Theorem 5.1 in \cite{newt4} shows that $N$ is a
$C^\infty$-embedded submanifold of $M$.  We consider a nonlinear filtering problem, as
developed in section \ref{se:mvnlf}, fulfilling (F1--F6) and the following additional
hypotheses.
\begin{enumerate}
\item[(F7)] $\PR(\Pi_t\in N, {\rm\ for\ all\ }t\ge 0)=1$.
\item[(F8)] For any $t$ and any $P\in N$, there exists at least one measurable
function $p:\bX\rightarrow(0,\infty)$ lying in the domain of $\ca_t$ for which
$dP/d\mu=p$.  For any $p$ with this property, $p^{-1}\ca_tp\in\cl^2(\bX,\cx,\mu)$ and
$\Lambda p^{-1}\ca_tp\in\spanof\{\xi_i\}$.  For any $p,\ptil$ with this property
$\mu(\ca_tp=\ca_t\ptil)=1$.
\item[(F9)] For any $t$ and any $k$, $\Lambda h_t^k\in\spanof\{\xi_i\}$ and
$\Lambda |h_t|^2\in\spanof\{\xi_i\}$.
\end{enumerate}
Let $\bfU,\bfV_k:[0,\infty)\times N\rightarrow TN$ be vector fields on $N$ with
$e$-representations
\begin{eqnarray}
\bfu_{e,t}(P)
  & := & \Lambda p^{-1}\ca_tp = \bfu_{\theta,t}^i(P)\xi_i \label{eq:vecfiel1} \\
\bfv_{e,k,t}(P)
  & := & \bfv_{e,k,t} = \Lambda h_t^k = \bfv_{\theta,k,t}^i\xi_i, \label{eq:vecfiel2}
\end{eqnarray}
where $p$ is as in (F8), and $\bfu_{\theta,t}(P)$ and $\bfv_{\theta,k,t}$ are the
$\theta$-representations.  Since $\bfv_{e,k,t}$ does not depend on $P$,
$\bfV_{k,t}\in C^\infty(N;TN)$ for all $t$.  We assume, further, that
\begin{enumerate}
\item[(F10)] $\bfU_t\in C^1(N;TN)$ for all $t$.
\end{enumerate}
Consider the following intrinsic Stratonovich equation on $N$:
\begin{equation}
\circ dQ_t
  = \left(\bfU_t(Q_t)
    - \half\sum_{k=1}^d\nabla_{\bfV_{k,t}}^{(-1)}\bfV_{k,t}(Q_t)\right)\,dt
    + \bfV_{k,t}(Q_t)\circ d\Btil_t^k, \label{eq:fdfil}
\end{equation}
where $\nabla^{(-1)}$ is Amari's $-1$-covariant derivative \cite{amar1}, and
$(\Btil_t,t\ge 0)$ is a $d$-vector Brownian motion.

\begin{proposition} \label{pr:expnlf}
Suppose that $(X,Y)$ satisfies (F1--F6), and (F7--F10) with respect to $N$, and that
$\bfU$ and $\bfV_k$ are as defined in (\ref{eq:vecfiel1},\ref{eq:vecfiel2}).  Then
(\ref{eq:fdfil}) has a strong solution
$\Psi:N\times C([0,\infty);\R^d)\rightarrow C([0,\infty);N)$, and $\Pi=\Psi(P_0,\nu)$.
\end{proposition}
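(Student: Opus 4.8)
The plan is to exploit Proposition~\ref{pr:mvalfil}, which already realises $\Pi$ as the solution of the $H$-valued It\^{o} equation (\ref{eq:hfilt}), and to show that, under (F7--F10), the coefficients of that equation are tangent to $N$ and that, after converting from It\^{o} to Stratonovich form, they coincide with the coefficients of (\ref{eq:fdfil}). Since $N$ is a $C^\infty$-embedded submanifold of $M$ (Theorem~5.1 of \cite{newt4}), it suffices to work intrinsically on $N$ using the exponential chart $\theta$; the strong-solution and uniqueness statements then follow from the standard theory of Stratonovich SDEs on finite-dimensional manifolds with $C^1$ (resp.\ $C^\infty$) coefficients, driven here by the Brownian motion $\nu$ in place of an abstract $\Btil$.

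First I would transcribe (\ref{eq:hfilt}) into the coordinates of $\theta$. Using (F7) we have $\Pi_t\in N$ a.s.; writing $\Pi_t=P_{\theta(\Pi_t)}$ and differentiating the chart $\theta$ along (\ref{eq:hfilt}) via the infinite-dimensional It\^{o} rule (Theorem~4.17 of \cite{daza1}), one obtains an $\R^n$-valued It\^{o} equation for $\theta(\Pi_t)$. The point is to identify its coefficients. The density $\pi_t$ may be taken of the exponential form $\exp(\sum_i\theta^i\xitil_i-c(\theta))$, so that $\pi_t^{-1}\ca_t\pi_t$ has $\Lambda$-image $\bfu_{e,t}(\Pi_t)$ by (F8) and (\ref{eq:vecfiel1}); likewise $h_t^k$ and $|h_t|^2$ have $\Lambda$-images lying in $\spanof\{\xi_i\}$ by (F9), giving the $e$-representations of $\bfV_{k,t}$ and of the $\zeta$-term. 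The term $\Lambda(1+\pi_t^{-1})\ca_t\pi_t$ contributes both an $m$-part and an $e$-part, but since the result is a tangent vector to $N$ at $\Pi_t$ its $\theta$-coordinates are determined by the $e$-part alone together with the change-of-chart Jacobian between $e$ and $\theta$ on $N$; this is exactly the mechanism by which $\bfU_t$ absorbs the drift. The diffusion coefficient likewise reduces to the $\theta$-representation of $\bfV_{k,t}$ applied to $d\nu_t^k$.

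The main obstacle is the It\^{o}--Stratonovich correction. Converting the It\^{o} equation for $\theta(\Pi_t)$ to Stratonovich form introduces a term $-\tfrac12\sum_k \partial_j\bfv_{\theta,k,t}^i\,\bfv_{\theta,k,t}^j$, and one must show this equals $-\tfrac12\sum_k(\nabla^{(-1)}_{\bfV_{k,t}}\bfV_{k,t})^i$ in $\theta$-coordinates, i.e.\ that the It\^{o} correction for this SDE is precisely the $-1$-connection (mixture-connection) correction. Here the key fact is that $\bfV_{k,t}$ has \emph{constant} $e$-representation (noted after (\ref{eq:vecfiel2})), so its $e$-covariant derivative vanishes; equivalently, in $\theta$-coordinates the $-1$-Christoffel symbols contracted against $\bfV_{k,t}$ reproduce exactly the second-derivative term coming from the nonlinearity of the map $e\mapsto\theta$, which is what the It\^{o} correction also sees. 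Carrying this identification out --- essentially a computation with Amari's $\pm1$-connections and the bi-orthogonality of $m$ and $e$ --- is the crux; once it is done, (\ref{eq:fdfil}) is literally the Stratonovich form of the coordinate equation for $\theta(\Pi_t)$.

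Finally I would invoke existence and uniqueness: $\bfV_{k,t}\in C^\infty$ and, by (F10), $\bfU_t-\tfrac12\sum_k\nabla^{(-1)}_{\bfV_{k,t}}\bfV_{k,t}\in C^1$, so on the (second-countable, finite-dimensional) manifold $N$ the Stratonovich equation (\ref{eq:fdfil}) has, for each starting point and each continuous driving path, a unique maximal strong solution depending measurably on $(P_0,\nu)$; write it $\Psi$. Since $\theta(\Pi)$ solves the same coordinate equation with $\Pi_0=P_0$ and driver $\nu$, pathwise uniqueness forces $\Pi=\Psi(P_0,\nu)$ up to the explosion time, and (F7) (which asserts $\Pi_t\in N$ for \emph{all} $t\ge0$) rules out explosion, so the solution is global. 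This completes the argument.
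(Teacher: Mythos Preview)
Your overall strategy is sound, but the crux of the argument---identifying where the $\nabla^{(-1)}$ term comes from---is mislocated. You claim it arises as the It\^{o}--Stratonovich correction in $\theta$-coordinates, i.e.\ as $-\tfrac12\sum_k\bfv_{\theta,k,t}^j\partial_j\bfv_{\theta,k,t}^i$. But you have already (correctly) observed that $\bfv_{\theta,k,t}$ is constant in $P$, so this correction is \emph{zero}; whereas $\sum_k\nabla^{(-1)}_{\bfV_{k,t}}\bfV_{k,t}$ is generically \emph{not} zero (in $\theta$-coordinates it equals $\sum_k\Gamma^{(-1),i}_{jl}\bfv_{\theta,k,t}^j\bfv_{\theta,k,t}^l\,\partial_i$, with $\Gamma^{(-1)}\neq 0$). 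The two cannot match. Your remark about ``the nonlinearity of the map $e\mapsto\theta$'' also misfires: on $N$ that map is linear, $e(P_y)=\sum_i y^i\xi_i$.

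The paper avoids the detour through $\phi$ altogether: it applies It\^{o}'s rule to (\ref{eq:pifilt}) to obtain an equation for $\log\pi_t$ directly, lifts it to $e(\Pi_t)$, and reads off $de(\Pi_t)=(\bfu_{e,t}(\Pi_t)-\bfw_{e,t}(\Pi_t))\,dt+\bfv_{e,k,t}\,d\nu_t^k$, where $\bfw_{e,t}(P)=\tfrac12\Lambda|h_t-\EP h_t|^2$ is exactly the $\zeta$-term you mention but never dispose of. Since $\bfv_{e,k,t}$ is constant, the It\^{o} and Stratonovich forms coincide here. The real work is then an explicit computation of the $-1$-Christoffel symbols via the Eguchi relations, yielding $\sum_k\nabla^{(-1)}_{\bfV_{k,t}}\bfV_{k,t}=2\bfW_t$; this is how the quadratic-variation drift already present in the It\^{o} equation for $\log\pi_t$ is identified with the covariant-derivative term in (\ref{eq:fdfil}). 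Your proposal needs that computation, not an It\^{o}--Stratonovich correction.
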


\begin{proof}
As in the proof of Proposition \ref{pr:mvalfil}, we can apply It\^{o}'s rule to
(\ref{eq:pifilt}) to show that, for all $x\in F$, $(\log\pi_t,\,t\ge 0)$
satisfies the following It\^{o} equation on $\Omega$:
\[
\log\pi_t = \log p_0 + \int_0^t\indic_F(\pi_s^{-1}\ca_s\pi_s-\zetatil_s)\,ds,
            + \int_0^t \indic_F(h_s-\hhbar_s)^*d\nu_s,
\]
where $\zetatil$ is as defined in (\ref{eq:IJdef}).  This can be ``lifted'' to an
$H$-valued equation in the same way that (\ref{eq:pplp}) was lifted to
(\ref{eq:hfilt}) in the proof of Proposition \ref{pr:mvalfil}.  The resulting equation
is:
\begin{eqnarray}
e(\Pi_t)
  & = & e(P_0)
        + \int_0^t(\Lambda\pi_s^{-1}\ca\pi_s-\zeta_s)\,ds
        + \int_0^t \Lambda h_s^*\,d\nu_s \nonumber \\
  & = & e(P_0) + \int_0^t (\bfu_{e,s}(\Pi_s)-\bfw_{e,s}(\Pi_s))\,ds
        + \int_0^t \bfv_{e,k,s}\,d\nu_s^k, \label{eq:efil}
\end{eqnarray}
where $\bfw_{e,t}(P)\,(:=\Lambda|h_t-\EP h_t|^2/2)$ is the $e$-representation of a
time-dependent vector field $\bfW:[0,\infty)\times N\rightarrow TN$.  For any $t$ such
that $|h_t|\in\cl^2(\bX,\cx,\mu)$,
$\EP h_t^k=\langle m(P),\Lambda h_t^k\rangle_H+\Emu h_t^k$, and so, according to
Theorem 5.1 in \cite{newt4}, $W_t\in C^\infty(N;TN)$ for all $t$.

The Christoffel symbols for $\nabla^{(-1)}$ can be found from the Eguchi relations
\cite{amar1,eguc1}:
\begin{eqnarray*}
\Gamma_{ij}^{(-1),l}(P)
  & = & -g^{lm}(P)\frac{\partial^3}{\partial y^i\partial y^j\partial \ytil^m}
        \cd(\theta^{-1}(y)\,|\,\theta^{-1}(\ytil))\big|_{\ytil=y=\theta(P)} \\
  & = & g^{lm}(P)\EP(\xitil_i-\EP\xitil_i)(\xitil_j-\EP\xitil_j)(\xitil_m-\EP\xitil_m),
\end{eqnarray*}
where $g^{lm}(P)$ is the $(l,m)$ element in the inverse of the Fisher matrix in
$\theta$-coordinates, $g$, as defined in (\ref{eq:fishmat}).  So
\begin{eqnarray*}
\nabla_{\bfV_{k,t}}^{(-1)}\bfV_{k,t}(P)
  & = & \Gamma_{ij}^{(-1),l}(P)\bfv_{\theta,k,t}^i\bfv_{\theta,k,t}^j \partial_l \\
  & = & g^{lm}(P)\EP\bfv_{\theta,k,t}^i(\xitil_i-\EP\xitil_i)\bfv_{\theta,k,t}^j
        (\xitil_j-\EP\xitil_j)(\xitil_m-\EP\xitil_m) \partial_l \\
  & = & g^{lm}(P)\EP(h_t^k-\EP h_t^k)^2(\xitil_m-\EP\xitil_m) \partial_l \\
  & = & g^{lm}(P)\EP\left((h_t^k-\EP h_t^k)^2-\EP(h_t^k-\EP h_t^k)^2\right)
        (\xitil_m-\EP\xitil_m)\partial_l,
\end{eqnarray*}
and
\[
\sum_k\nabla_{\bfV_{k,t}}^{(-1)}\bfV_{k,t}(P)
  = 2g^{lm}(P)\left\langle\bfW_t(P) , \partial_m\right\rangle_P \partial_l,
\]
which shows that $\sum_k\nabla_{\bfV_{k,t}}^{(-1)}\bfV_{k,t}=2\bfW_t\in C^\infty(N;TN)$.
So (\ref{eq:fdfil}) has a strong solution, $\Psi$.  The fact that $\Pi=\Psi(P_0,\nu)$
follows from (\ref{eq:efil}), which is the $e$ representation of (\ref{eq:fdfil}).
\end{proof}

\section{Examples} \label{se:examples}

\subsection{An infinite-dimensional diffusion filter}  \label{se:infdex}

This example is developed from that in section 8.6.2 of \cite{lish1}.  The
signal is a special case of the diffusion process of Example \ref{ex:diffsig} in
section \ref{se:mvnlf}, in which $m=d=1$, $a\equiv 1$, and $b$ and $h$ satisfy
\begin{eqnarray}
& |b(x)|+|b'(x)|+|b''(x)|+|b'''(x)|+|h(x)|+|h'(x)|+|h''(x)|
  \le C, & \nonumber \\
& \hfill|b'''(x)-b'''(y)|+|h''(x)-h''(y)|
  \le C|x-y|, \hfill & \label{eq:infdbnd}
\end{eqnarray}
for all $x,y\in\R$ and some $C<\infty$, $P_0=N(0,R)$ (the Gaussian measure with mean
zero and variance $R>0$), and $\mu$ has density $2^{-1}\exp(-|x|)$ with respect to
Lebesgue measure.

\begin{proposition} \label{pr:infdex}
The diffusion process $(X,Y)$ defined above satisfies (F1--F6).
\end{proposition}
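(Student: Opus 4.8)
The task is to verify the list of hypotheses (F1--F6) for this specific scalar diffusion filter, so the plan is to go through them one by one, drawing on the regularity estimates in \cite{lish1} (section 8.6.2) and the boundedness assumptions (\ref{eq:infdbnd}). First, (F1): since $P_0=N(0,R)$ and $\mu$ has the two-sided exponential density $2^{-1}e^{-|x|}$, the density $p_0=dP_0/d\mu$ is an explicit Gaussian-times-$e^{|x|}$ expression; one checks directly that $\Emu p_0^2<\infty$ and $\Emu\log^2 p_0<\infty$, since $\log p_0$ is a quadratic-plus-linear function of $x$ and $\mu$ has exponential tails, giving all the moments needed. Hypothesis (F2) is immediate from $|h(x)|\le C$ and the fact that $X$ is a well-defined diffusion (so $\Eout\int_0^T|h_t(X_t)|^2\,dt\le C^2T<\infty$).

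The substantive work is in (F3)--(F6), which assert that the Kushner--Stratonovich equation has a strictly positive, $\cp_Y\times\cx$-measurable solution $\pi_t$ with a density, satisfying the Zakai/KS It\^o equation (\ref{eq:pifilt}), together with the integrability bounds (\ref{eq:f41})--(\ref{eq:f42}). The plan is to invoke the existence-and-regularity theory for the unnormalised filter density from \cite{lish1}: under the $C^3$-type bounds on $b$ and $h$ in (\ref{eq:infdbnd}) and the Gaussian initial condition, the unnormalised density $\util_t$ (with respect to Lebesgue measure) exists, is strictly positive, lies in a weighted Sobolev space for each $t$, and satisfies the Zakai equation; normalising gives $\pi_t$ and (\ref{eq:pifilt}). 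Here $\ca p = \half p'' - (bp)'$ (from Example~\ref{ex:diffsig} with $a\equiv 1$, $r(x)=2e^{|x|}$, after converting between the Lebesgue density and the $\mu$-density), so ${\rm Dom}\,\ca_t$ requires two weak derivatives, which the Sobolev regularity of the filter density supplies; this handles (F4) qualitatively. The $\cp_Y\times\cx$-measurability in (F3)--(F4) follows from the adaptedness and continuity of the filter density process together with Lemma~\ref{le:Pettis}-type joint-measurability arguments.

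The main obstacle will be the quantitative integrability estimates (\ref{eq:f41}), (\ref{eq:f43}) and (\ref{eq:f42}): these require controlling $\Emu(1+\pi_t^{-1})^2(\ca_t\pi_t)^2$, $\Emu|h_t-\hhbar_t|^4$ and $\Emu(\pi_t+1)^2|h_t-\hhbar_t|^2$, integrated in $t$, $\PR$-a.s. The $h$-terms are the easier ones: $|h_t-\hhbar_t|\le 2C$ so (\ref{eq:f43}) is trivial, and (\ref{eq:f42}) reduces to showing $\Emu\pi_t^2<\infty$ (i.e.\ $\Pi_t\in M$ via (M2)) together with $\int_0^T$ of it being finite, which should follow from the $L^2$-type bounds on the filter density. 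The genuinely delicate estimate is (\ref{eq:f41}): one needs both that $\pi_t^{-1}$ does not blow up too fast relative to $\mu$ (i.e.\ control of $\Emu\pi_t^{-2}(\ca_t\pi_t)^2$) and that $\ca_t\pi_t\in\cl^2(\mu)$ with an integrable-in-$t$ norm. The plan is to derive, from the weighted-Sobolev bounds on the unnormalised density in \cite{lish1}, pointwise-in-$x$ Gaussian-type upper and lower bounds on $\pi_t(\omega,x)$ (of the form $c_1(t,\omega)e^{-\alpha x^2}\le \pi_t(x)/(dP_0/d\mu) \le c_2(t,\omega)e^{\beta x^2}$ with locally bounded random constants), so that $\pi_t^{-1}$ grows at most like $e^{\beta x^2}$; then, since $\mu$ has exponential tails, $\ca_t\pi_t$ (a second-order differential expression in a smooth Gaussian-decaying function) times $(1+\pi_t^{-1})$ remains square-integrable against $\mu$, with an $L^1_t$ norm that is a.s.\ finite by the continuity of all the random prefactors in $t$. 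Assembling these bounds to verify (\ref{eq:f41}) is the step I expect to require the most care; everything else is bookkeeping built on the cited existence theorem and the explicit, benign form of $\mu$ and $P_0$.
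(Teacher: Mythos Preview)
Your outline for (F1), (F2), (F5) and the qualitative parts of (F3), (F6) is fine and matches the paper. The divergence is in how you propose to obtain the quantitative bound (\ref{eq:f41}), and here there is a real gap. The material in \cite{lish1}, \S8.6.2, is not a weighted-Sobolev regularity theory; it is an \emph{explicit} Kallianpur--Striebel/Brownian-bridge representation of the conditional density,
\[
\pitil_t(x)=\int\Eouttil\exp\big(B(x)-B(y)+\Gamma_t(x,y)\big)\,n(y,t)(x)\,n(0,R)(y)\,dy,
\]
with $\Gamma_t$ a stochastic-integral functional of a pinned Brownian motion. The paper exploits this formula directly: Jensen's inequality (convexity of $z\mapsto z^2$ and of $z\mapsto z^{-1}$) applied to the integral representation, together with an exponential-martingale bound on $\Eout\Eouttil\exp(k\Gamma_t)$, yields closed-form estimates for $\Eout\Emu\pi_t^2$, $\Eout\Emu(\ca\pi_t)^2$ and, crucially, $\Eout\Emu(\pi_t^{-1}\ca\pi_t)^2$.

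Your plan to extract ``Gaussian-type pointwise upper and lower bounds'' on $\pi_t$ from Sobolev estimates does not explain where the \emph{lower} bound comes from: Sobolev control gives regularity and upper bounds, not strict positivity with quantitative decay. The paper's route avoids this entirely because the explicit formula lets one bound $\pi_t(x)^{-1}$ by Jensen (since the integrand is an exponential and $z\mapsto 1/z$ is convex), and then pair that bound with the analogous explicit formula for $\ca\pi_t$ to control $|\pi_t^{-1}\ca\pi_t|$ directly. If you want to carry out (F4), you should use this representation rather than an abstract regularity argument.
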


\begin{proof}
(F1) is easily verified and, since $h$ is bounded, (F2) is immediate.  According to
Lemma 8.5 in \cite{lish1}, $X_t$ admits the following $\cy_t$-conditional
density with respect to Lebesgue measure:
\[
\pitil_t(x)
  = \int\Eouttil\exp(B(x)-B(y)+\Gamma_t(x,y))n(y,t)(x)n(0,R)(y)dy,
\]
where $B(x):=\int_0^x b(y)dy$, $n(m,v)$ is the mean $m$, variance $v$ Gaussian density,
\[
\Gamma_t(x,y)
  := \int_0^t (h-\hhbar_s)(\Xtil_s^{y,t,x})\,d\nu_s
      - \half\int_0^t\left((h-\hhbar_s)^2+b^2+b'\right)(\Xtil_s^{y,t,x})\,ds,
\]
and $(\Xtil_s^{y,t,x}, s\in[0,t])$ is a Brownian motion on an auxiliary probability
space $(\Omegatil,\cftil,\PRtil)$, pinned to the values $y$ at $s=0$ and $x$ at $s=t$.
$\Xtil^{y,t,x}$ can be expressed in terms of a Brownian bridge process
$(\Wtil_s^t, s\in[0,t])$ as $\Xtil_s^{y,t,x}=sx/t + (t-s)y/t + \Wtil_s^t$. (See
Corollary 8.6 in \cite{lish1}.  NB.~Equations (8.97) and (8.108) in
\cite{lish1} contain some typographical errors, which are corrected in the
above.)  The $\cy_t$-conditional distribution of $X_t$ thus admits the strictly positive
density $\pi_t:=\pitil_t/r$ with respect to $\mu$, where $r=2^{-1}\exp(-|x|)$.

Theorem 8.7 in \cite{lish1} shows that $\pi$ satisfies (F6).  In particular,
$\pi$ is $(\cy_t)$-adapted for each $x$ and continuous in $(t,x)$, and hence
$\cp_Y\times\cx$-measurable. It thus satisfies (F3).  Equations (8.123) and (8.124) in
\cite{lish1} enable the explicit calculation of $\ca\pi_t$; straightforward
calculations (involving integration by parts) show that
\[
(\ca\pi_t)(x) = \frac{1}{2r}\int\Eouttil\gamma_t(x,y)
                  \exp(B(x)-B(y)+\Gamma_t(x,y))n(y,t)(x)n(0,R)(y)\,dy,
\]
where
\begin{eqnarray*}
\gamma_t(x,y)
  & := & -b^2(x) + \left(b(y)-\frac{\partial\Gamma_t}{\partial x}
         - \frac{\partial\Gamma_t}{\partial y} + \frac{y}{R}\right)^2 \\
  &    & \quad -b'(x)-b'(y)+\frac{\partial^2\Gamma_t}{\partial x^2}
         + 2\frac{\partial^2\Gamma_t}{\partial x\partial y}
         + \frac{\partial^2\Gamma_t}{\partial y^2} - \frac{1}{R}.
\end{eqnarray*}
(A proof of the existence and continuity of the derivatives of $\Gamma$ is contained
in the proof of Lemma 8.8 in \cite{lish1}.)  In particular, $\ca\pi$ is
$(\cy_t)$-adapted for each $x$ and continuous in $(t,x)$, and hence
$\cp_Y\times\cx$-measurable.  The derivatives of $\Gamma$ can be computed in closed
form;  for example
\[
\frac{\partial\Gamma_t}{\partial x}
  = \int_0^t \frac{s}{t}h'(\Xtil_s^{y,t,x})\, d\nu_s
    - \int_0^t \frac{s}{t}((h-\hhbar_s)h'+bb'+b''/2)(\Xtil_s^{y,t,x})\, ds.
\]

The joint density $n(y,t)(x)n(0,R)(y)$ can be written in the
$x$-marginal/$y$-conditional form, $n(\alpha_tx,\sigma_t^2)(y)n(0,R+t)(x)$, where
$\alpha_t:=R/(R+t)$ and $\sigma_t^2:=Rt/(R+t)$, and so
\begin{eqnarray}
\pi_t(x)
  & = & \frac{n(0,R+t)}{r}(x)\int\Eouttil\exp(B(x)-B(y)+\Gamma_t(x,y)) \nonumber \\
  &   & \qquad\qquad\qquad\qquad\qquad\qquad
        \times n(\alpha_tx,\sigma_t^2)(y)\,dy,\qquad \label{eq:mudens1} \\
(\ca\pi_t)(x)
  & = & \frac{n(0,R+t)}{2r}(x)\int\Eouttil\gamma_t(x,y)
        \exp(B(x)-B(y)+\Gamma_t(x,y)) \nonumber  \\
  &   & \qquad\qquad\qquad\qquad\qquad\qquad
        \times n(\alpha_tx,\sigma_t^2)(y)\,dy. \label{eq:mudens2}
\end{eqnarray}
Since $|b|,|b'|,|h|\le C$, for any $k\in\N$ and any $x,y\in\R$,
\begin{eqnarray}
\Eout\Eouttil\exp(k\Gamma_t(x,y))
  & \le & \Eout\Eouttil\Xi_t(x,y)\exp((2k(k-1)C^2+k(C^2+C)/2)t) \nonumber \\
  &  =  & \exp((2k(k-1)C^2+k(C^2+C)/2)t), \label{eq:egambnd}
\end{eqnarray}
where $\Xi(x,y)$ is the exponential martingale
\[
\Xi_t(x,y) := \exp\left(k\int_0^t (h-\hhbar_s)(\Xtil_s^{y,t,x})\,d\nu_s
              - \frac{k^2}{2}\int_0^t (h-\hhbar_s)^2(\Xtil_s^{y,t,x})\,ds\right).
\]
Furthermore, since $|B(y)|\le C|y|$,
\begin{eqnarray}
\int\exp(-kB(y))n(\alpha_tx,\sigma_t^2)(y)\,dy
  & \le & 2\int\cosh(kCy)n(\alpha_tx,\sigma_t^2)(y)\,dy \nonumber \\
  &  =  & 2\exp(k^2C^2\sigma_t^2/2)\cosh(kC\alpha_tx) \nonumber \\
  & \le & 2\cosh(kCx)\exp(k^2C^2t/2).  \label{eq:expmbnd}
\end{eqnarray}
Applying Jensen's inequality to (\ref{eq:mudens1}), we obtain
\[
\pi_t^2(x)
  \le \frac{n(0,R+t)^2}{r^2}(x)\int\Eouttil\exp(2(B(x)-B(y)+\Gamma_t(x,y)))
      n(\alpha_tx,\sigma_t^2)(y)\,dy,
\]
and so it follows from (\ref{eq:egambnd}) and (\ref{eq:expmbnd}) with $k=2$ that
\begin{eqnarray}
\Eout\Emu\pi_t^2
  & \le & 4\exp((7C^2+C)t)\int\cosh^2(2Cx)\frac{n(0,R+t)^2}{r}(x)\,dx \nonumber \\
  & \le & K_T < \infty \quad{\rm for\ all\ }t\in[0,T] {\rm\ and\ any\ }T<\infty.
          \label{eq:pisqbnd}
\end{eqnarray}
Together with the boundedness of $h$, this shows that (F5) is satisfied.  Applying
Jensen's inequality and the Cauchy-Schwartz inequality to (\ref{eq:mudens2}), we obtain
\begin{eqnarray*}
\Eout(\ca\pi_t)^2(x)
  & \le & \frac{n(0,R+t)^2}{4r^2}(x)
          \int\sqrt{\Eout\Eouttil\gamma_t(x,y)^4} \\
  &     & \quad\times\sqrt{\Eout\Eouttil\exp(4(B(x)-B(y)+\Gamma_t(x,y)))}
          n(\alpha_tx,\sigma_t^2)(y)\,dy.
\end{eqnarray*}
It follows from the bounds in (\ref{eq:infdbnd}), and standard properties of the Lebesgue
and It\^{o} integrals that, for any $x,y\in\R$,
\[
\Eout\Eouttil\gamma_t(x,y)^4 \le K(1+t^8)(1+y^8) \quad {\rm for\ some\ }K<\infty.
\]
Following the same steps as were used in the proof of (\ref{eq:pisqbnd}) (but using
$k=4$ in (\ref{eq:egambnd}) and (\ref{eq:expmbnd})) we now conclude that
\begin{equation}
\Eout\Emu(\ca\pi_t)^2
  \le \tilde{K}_T
   <  \infty \quad{\rm for\ all\ }t\in[0,T] {\rm\ and\ any\ }T<\infty. \label{eq:aspsqbnd}
\end{equation}
A further application of Jensen's inequality to (\ref{eq:mudens1}) yields
\[
\pi_t(x)^{-1}
  \le \frac{r}{n(0,R+t)}(x)\int\Eouttil\exp(-B(x)+B(y)-\Gamma_t(x,y))
      n(\alpha_tx,\sigma_t^2)(y)\,dy,
\]
which, together with (\ref{eq:mudens2}), shows that
\begin{eqnarray*}
\left|\pi_t(x)^{-1}\ca\pi_t(x)\right|
  & \le & \half\int\Eouttil|\gamma_t(x,y)|
          \exp(-B(y)+\Gamma_t(x,y))n(\alpha_tx,\sigma_t^2)(y)\,dy \\
  &     & \quad \times\int\Eouttil\exp(B(y)-\Gamma_t(x,y))n(\alpha_tx,\sigma_t^2)(y)\,dy,
\end{eqnarray*}
and the bound,
\begin{equation}
\Eout\Emu\left(\pi_t^{-1}\ca\pi_t\right)^2
  \le \exp(K(1+t)) \quad {\rm for\ some\ }K<\infty, \label{eq:pasbnd}
\end{equation}
easily follows.  The bound (\ref{eq:f41}), for any $T<\infty$, follows from
(\ref{eq:aspsqbnd}) and (\ref{eq:pasbnd}), and this establishes (F4).
\end{proof}

\subsection{A Kalman-Bucy Filter} \label{se:kbfilt}

This is another example in which the signal is a diffusion process.  Here $b(x)=Bx$,
$a(x)=A$, $h(x)=Cx$ and $P_0=N(m_0,R_0)$, where $B$ is an $m\times m$ matrix, $A$ is
a positive semi-definite $m\times m$ matrix, $C$ is a $d\times m$ matrix, and $R_0$ is
a positive definite $m\times m$ matrix.  The posterior distribution is
$\Pi_t=N(\Xbar_t,R_t)$, where the mean vector, $\Xbar_t$, and covariance matrix, $R_t$,
satisfy the Kalman-Bucy filtering equations \cite{lish1}:
\begin{eqnarray}
\Xbar_t  & = & m_0 + \int_0^t B\Xbar_s\,ds + \int_0^t R_sC^*\,d\nu_s \label{eq:kbfilt1} \\ 
R_t      & = & R_0 + \int_0^t (BR_s + R_sB^* + A - R_sC^*CR_s)\,ds. \label{eq:kbfilt2}
\end{eqnarray}
It is well known that such Gaussian measures belong to finite-dimensional exponential
statistical manifolds.  In order to apply the results of sections \ref{se:mvnlf} and
\ref{se:finexp}, we construct such a manifold as a $C^\infty$-embedded submanifold of
$M(\R^m,\mu)$, where $\mu$ has density $2^{-m}\exp(-\sum_j|x^j|)$ with respect to
Lebesgue measure.  Let $\SI$ be the set of symmetric positive definite $m\times m$
real matrices, and let $n:=m(m+3)/2$.  For any $y\in\R^n$, let $\alpha(y)$ be the
$m$-vector comprising the first $m$ elements of $y$, and let $\beta(y)$ be the
symmetric $m\times m$ matrix whose lower triangle contains the elements
$y^{m+1},y^{m+2},\ldots,y^n$ in some fixed arrangement.  Then
$G:=(\alpha,\beta)^{-1}(\R^m\times\SI)$ is an open subset of $\R^n$, and the map
$(\alpha,\beta):G\rightarrow \R^m\times\SI$ is a linear bijection.
Let $\etil:\bX\times\R^n\times\R\rightarrow\R$ be defined by
\[
\etil(x,y,z) := -\half x^*\beta(y)x + \alpha(y)^*x + z\sum_{j=1}^m|x^j|;
\]
then $\Lambda\etil(\fndot,y,z)\in e(M)$ for all $(y,z)\in G\times\R$.  Let
$\xi_i:=\Lambda\etil(\fndot,\bfe_i,0)$, where $(\bfe_i,\,1\le i\le n)$ is the coordinate
orthonormal basis in $\R^n$, let $\xi_{n+1}:=\Lambda\etil(\fndot,0,1)$, and let
$\Ntil:=e^{-1}\circ\gamma(G\times\R)$, where $\gamma(y,z) := y^i\xi_i+z\xi_{n+1}$.
$\Ntil$ is an $n+1$-dimensional instance of the exponential manifold discussed in
section \ref{se:finexp}. The $n$-dimensional submanifold
$N:=e^{-1}\circ\gamma(G\times\{1\})$ comprises all the non-singular Gaussian measures
on $\R^m$.

\begin{proposition} \label{pr:kbfilt}
The diffusion process $(X,Y)$ defined above satisfies (F1--F6), and (F7--F10) with
respect to the $n$-dimensional submanifold $N$.
\end{proposition}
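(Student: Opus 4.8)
The plan is to check the ten hypotheses in turn, exploiting two structural facts about this linear--Gaussian problem. Write $r(x) = 2^{-m}\exp(-\sum_j|x^j|)$ for the density of $\mu$ with respect to Lebesgue measure; since $b^i = B^i_j x^j$ and $a^{ij} = A^{ij}$ is constant, the operator $\ca$ of Example \ref{ex:diffsig} acts by $\ca p = r^{-1}\cl^* (rp)$, where $\cl^* q := \half A^{ij}\partial_i\partial_j q - \partial_i(B^i_j x^j q)$ is the Fokker--Planck operator; a direct differentiation gives $\cl^* n(\bar x, R) = q_{\bar x,R}\, n(\bar x,R)$ with $q_{\bar x,R}$ a quadratic polynomial whose coefficients depend rationally, hence $C^\infty$-smoothly, on $(\bar x,R)\in\R^m\times\SI$. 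Also, because $(\alpha,\beta)$ parametrises all vectors and symmetric matrices and $\Lambda$ annihilates constants, $\spanof\{\xi_i\}$ is exactly $\Lambda$ applied to the polynomials of degree $\le 2$ on $\R^m$. Throughout I write $\pitil_t := r\pi_t = n(\Xbar_t,R_t)$ for the Lebesgue density of $\Pi_t$. Since $r$ has exponential tails, every polynomial lies in $\cl^2(\bX,\cx,\mu)$ and Gaussian densities divided by $r$ are $\mu$-square-integrable; these facts make most of the verification routine.

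For (F1)--(F5): (F1) holds because $p_0 = n(m_0,R_0)/r > 0$, $\log p_0$ equals a constant minus $\half(x-m_0)^*R_0^{-1}(x-m_0)$ plus $\sum_j|x^j|$, so $p_0^2$ is $r^{-1}$ times a Gaussian and $\log^2 p_0$ is dominated by a quartic polynomial, both $\mu$-integrable. (F2) is immediate since $h(X_t)=CX_t$ and $X$ is an Ornstein--Uhlenbeck process with second moments finite and continuous in $t$. (F3): conditional Gaussianity of the Kalman--Bucy filter (Liptser and Shiryaev \cite{lish1}) gives $\PR(X_t\in A\mid\cy_t)=\int_A\pitil_t\,dx$; since $(\Xbar_t,R_t)$ has continuous $(\cy_t)$-adapted paths and $(\bar x,R,x)\mapsto n(\bar x,R)(x)/r(x)$ is jointly continuous, $\pi$ is $\cp_Y\times\cx$-measurable, strictly positive and normalised. (F4): $\ca\pi_t = r^{-1}q_{\Xbar_t,R_t}\pitil_t$, and from $(1+\pi_t^{-1})^2(\ca\pi_t)^2 \le 2(\ca\pi_t)^2 + 2\pi_t^{-2}(\ca\pi_t)^2$ one obtains $\Emu(1+\pi_t^{-1})^2(\ca\pi_t)^2 \le 2\int q_{\Xbar_t,R_t}^2\pitil_t^2 r^{-1}\,dx + 2\int q_{\Xbar_t,R_t}^2 r\,dx$, both finite and continuous in $(\Xbar_t,R_t)$, hence a.s.\ bounded on compact $t$-intervals. (F5): here $\hhbar_t = \int Cx\,\pitil_t\,dx = C\Xbar_t$, so $h_t - \hhbar_t$ is the affine function $x\mapsto C(x-\Xbar_t)$; thus $\Emu|h_t-\hhbar_t|^4$ is a quartic polynomial in $\Xbar_t$, and $\Emu(\pi_t+1)^2|h_t-\hhbar_t|^2 \le 2\int|C(x-\Xbar_t)|^2\pitil_t^2 r^{-1}\,dx + 2\int|C(x-\Xbar_t)|^2 r\,dx$, both finite and continuous in $\Xbar_t$, whence the a.s.\ finiteness of the path integrals follows from path continuity.

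For (F6) the self-contained route is to apply It\^o's rule to $\pitil_t = n(\Xbar_t,R_t)(x)$ for each fixed $x$, using (\ref{eq:kbfilt1}) and (\ref{eq:kbfilt2}) --- this is exactly the pointwise statement (F6) requires, the Hilbert-space lifting being deferred to Proposition \ref{pr:mvalfil}. The $d\nu_t$-coefficient is $(\nabla_{\bar x}n)^*R_tC^* = n(\Xbar_t,R_t)(x)\,(x-\Xbar_t)^*C^* = \pitil_t\,(h_t-\hhbar_t)^*$, while in the drift the It\^o correction $\half\,\tr(\nabla^2_{\bar x}n\cdot R_tC^*CR_t)$ cancels exactly against the $-R_tC^*CR_t$ term of $dR_t$ acting through $\nabla_R n$, leaving the drift equal to $\cl^*\pitil_t$ --- the standard fact that a Gaussian transported by $\dot{\bar x}=B\bar x$, $\dot R = BR+RB^*+A$ solves the Fokker--Planck equation. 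Dividing by $r$ gives (\ref{eq:pifilt}) with $\pi_0 = p_0$. (Alternatively one cites the Kushner--Stratonovich equation, which is classical in the linear--Gaussian case and reduces to (\ref{eq:kbfilt1}), (\ref{eq:kbfilt2}) \cite{lish1}.)

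Finally (F7)--(F10). For (F7): $R_t$ is deterministic; since $R_0\in\SI$ and $A\ge 0$, comparing (\ref{eq:kbfilt2}) with the linear equation $\dot R = BR+RB^*+A$ bounds $R_t$ above on every compact $t$-interval (so there is no finite-time blow-up), and writing $R_t - \Psi_t R_0\Psi_t^*$ (with $\Psi$ the fundamental matrix of $\dot\Psi=(B-R_tC^*C)\Psi$) as the solution of a Lyapunov equation with nonnegative forcing keeps $R_t$ positive definite; hence $\Pi_t = N(\Xbar_t,R_t)\in N$ for all $t$, a.s. (F8): for $P=N(\bar x,R)\in N$ the representative $p=n(\bar x,R)/r$ is strictly positive and in the domain of $\ca$, $p^{-1}\ca p = \cl^*n(\bar x,R)/n(\bar x,R) = q_{\bar x,R}\in\cl^2(\bX,\cx,\mu)$ with $\Lambda q_{\bar x,R}\in\spanof\{\xi_i\}$ by the degree-$\le 2$ observation, and any other admissible representative agrees Lebesgue-a.e.\ hence --- being continuous once multiplied by $r$ --- everywhere, so $\ca p$ is unambiguous. (F9): $h_t^k(x)=(Cx)_k$ is affine and $|h_t(x)|^2 = x^*C^*Cx$ is quadratic, so both have $\Lambda$-images in $\spanof\{\xi_i\}$. (F10): in the $\theta$-chart the coefficients of $\bfu_{e,t}(P)=\Lambda q_{\bar x,R}$ (cf.\ (\ref{eq:vecfiel1})) depend smoothly on $(\bar x,R)$, so $\bfU_t\in C^\infty(N;TN)\subset C^1(N;TN)$. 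The only two steps with content beyond Gaussian bookkeeping are the It\^o cancellation underlying (F6) and the global positive-definiteness of the Riccati solution in (F7); the latter is standard linear-systems theory, so the main obstacle I expect is carrying out the It\^o computation for (F6) exactly.
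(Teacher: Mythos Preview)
Your proof is correct and follows essentially the same approach as the paper's: both verify (F1)--(F10) directly from the Gaussian structure, with the key observation that $p^{-1}\ca p$ is a quadratic polynomial in $x$ (you write it as $q_{\bar x,R}$; the paper gives the explicit formula in exponential coordinates) and hence lies in $\spanof\{\xi_i\}$. The paper's proof is extremely terse---it dispatches (F3)--(F6) with the single phrase ``easily verified from (\ref{eq:kbfilt1},\ref{eq:kbfilt2})'' and says nothing about why $R_t$ stays positive definite---so your detailed It\^o computation for (F6) and Lyapunov-type argument for (F7) are natural elaborations rather than a different route.
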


\begin{proof}
(F7) (and hence (F1)) follows from the fact that $R_t\in\SI$ for all $t$; (F2)
and (F9) are obvious.  Straightforward calculations show that, for any $P\in N$,
\begin{eqnarray*}
\frac{\ca p}{p}(x)
  & = & \half x^*\beta(y)(A\beta(y)+2B)x - \alpha(y)^*(A\beta(y)+B)x \\
  &   & \quad +\half\big(\alpha(y)^*A\alpha(y)-\tr(A\beta(y)+2B)\big),
\end{eqnarray*}
where $y=\theta(P)$, and (F8) and (F10) readily follow.  (F3--F6) are easily verified
from (\ref{eq:kbfilt1},\ref{eq:kbfilt2}).
\end{proof}

\subsection{Wonham's Filter} \label{ex:wonham}

In this, $\bX$ and $\ca$ are as defined in Example \ref{ex:discsig} of section
\ref{se:mvnlf}, $X$ is a Markov jump process for which $\PR(X_0=x)>0$ for all $x$,
and $\mu$ is the uniform probability measure.  $M$ is itself an $n$ ($=m-1$)-dimensional
exponential statistical manifold.  In the set-up of section \ref{se:finexp},
appropriate choices are $G=\R^n$ and
$\xi_i := \indic_{\{i\}}-n^{-1}\sum_{j\neq i}\indic_{\{j\}}$.
(F1--F10) are easily verified.

\section{Concluding Remarks} \label{se:concl}

This paper developed information geometric representations for nonlinear filters in
continuous time, and studied their properties.  Information manifolds are natural state
spaces for the posterior distributions of Bayesian estimation problems where many
statistics are required.  They clarify information-theoretic properties of estimators,
and their metrics are appropriate ``multi-objective'' measures of approximation error.
The results also have bearing on the theory of non-equilibrium statistical mechanics,
in which rates of entropy production can be associated with rates of information supply
\cite{newt3}, and hence with the quadratic variation of a process of ``mesoscopic states''
in a particular pseudo-Riemannian metric.

The development of approximations is beyond the scope of this paper.  However, we
conclude with a few remarks on this issue.  One approach is to first define an
appropriate differential equation (evolution equation) to which numerical methods might
be applied.  Equations (\ref{eq:hfilt}) and (\ref{eq:fdfil}) are expressed in terms of
the innovations process $\nu$ in order to emphasise their information theoretic
properties. Substituting for $\nu$ in (\ref{eq:hfilt}), we obtain an $H$-valued It\^{o}
equation for the nonlinear filter in terms of the observation process $Y$:
\begin{equation}
\phi(\Pi_t) = \phi(P_0) + \int_0^t(u_s-z_s)\,ds + \int_0^t v_s\,dY_s, \label{eq:ydriven}
\end{equation}
where $z_t := \zeta_t + v_t\hhbar_t$.  If $h$ is bounded, then $z$ and $ve_k$ can be
expressed in terms of the time-dependent, locally Lipschitz vector fields
$\bfz,\bfv_k:[0,\infty)\times H\rightarrow H$, where
\begin{eqnarray}
\bfz_t(a)     & = & \Lambda\left(\half|h_t-\EP h_t|^2+(p+1)(h_t-\EP h_t)^*\EP h_t\right)
                    \\
\bfv_{k,t}(a) & = & \Lambda(p+1)(h_t^k-\EP h_t^k),
\end{eqnarray}
and $P=\phi^{-1}(a)$.  However, except in special cases such as the exponential filters
of section \ref{se:finexp}, $u$ is more problematic since the {\em infinitesimal}
characterisation of $P_t$ in (\ref{eq:kolfor}) is dependent on the topology of the
signal space $\bX$.  The topology of $M$ arises from purely measure-theoretic
constructs, and is not dependent on the existence of a topology on $\bX$.
(\cite{newt4} assumes only that $(\bX,\cx,\mu)$ is a probability space.)
This is quite natural in the context of Bayesian estimation and, in particular,
nonlinear filtering: Bayes' formula and Shannon's information quantities are
measure-theoretic in nature, as is the Markov property in its most general form (a
property of conditional independence).  It may be possible to overcome this problem by
strengthening the topology of $M$ in some way (for example, by the use of Sobolev space
techniques in the case  of filters for diffusion signals).  However, this is not
necessarily the best approach; for the purposes of approximation, it suffices to solve
a simpler evolution equation for an approximate filter.  This idea is developed in
\cite{bhlg1}, where approximations to $\Pi$ are constrained to remain on
finite-dimensional exponential statistical manifolds, on which projections of the
processes $u$, $z$ and $v_k$ can be represented in terms of locally Lipschitz
vector fields.  The manifold $M$ contains a rich variety of smoothly embedded
submanifolds, to which this method could be generalised \cite{newt4}.  The selection
of a good submanifold for a particular problem, together with a suitable coordinate
system, would be critical to this approach.

Hilbert-space-valued filtering equations based on the {\em Zakai equation} may be
more suitable for these purposes.  Manifolds of finite (un-normalised) measures, to
which the Zakai equation could be lifted, are developed in \cite{newt5}.
These avoid the normalisation constant $Z(a)$ in the computation of the density.

Another approach to the problem of approximation would be to switch to a discrete-time
model ``up front'', replacing $\Pi$ by a time sampled version.  This would replace the
It\^{o} equation (\ref{eq:ydriven}) by a difference equation, on which approximations
could be based.  This would avoid the Kolmogorov forward equation (\ref{eq:kolfor}),
replacing it by an integral equation (the Chapman-Kolmogorov equation) over each time
step, and thereby eliminating problems concerning the topology of $\bX$.  (In the case of
diffusion signal processes, for example, the transition measure over a short time step
could be approximated by an appropriate Gaussian.)

Time reversal is used in \cite{newt1,newt2,newt3} to construct {\em dual}
filtering problems, in which the primal signal and filter processes exchange roles.
In the notation of this article, the dual filter computes the process of posterior
distributions for the primal filter $\Pi$ (regarded as a dual signal) in reverse time,
based on a dual observation process.  Such posterior distributions take values in the
set of probability measures on $M$.  Since $M$ is itself a complete, separable metric
space, one can easily use the construction of section \ref{se:manif} to define a (dual)
Hilbert manifold of such probability measures.  However, a striking feature of the dual
filter is that it is parametrised by the primal signal process $X$, reversed in time.
In this way, the topology of the primal signal space $\bX$ is connected with the
information topology of the dual problem.  It may be possible to exploit this fact in
filter approximations.

Notions of information {\em supply} and {\em dissipation} for nonlinear filters are
defined in \cite{newt2,newt3}.  The supply at time $t$ is the mutual
information $I(X;Y_0^t)$, and the dissipation is the $X_t$-conditional variant,
$\Eout I(X;Y_0^t|X_t)$.  Modulo initial conditions, the supply of the primal filter
is the dissipation of its dual, and vice-versa \cite{newt3}.  The quantity
$\Eout I(X;Y_0^t|X_t)$ was studied in \cite{maza1} in the context of filters for
diffusion processes, and shown to be connected with the Fisher metric in the sense that
\[
\Eout I(X;Y_0^t\cond X_t)
  = \half\int_0^t\Eout\left(\nabla\log\frac{\pi_s}{p_s}\right)^*
    a\left(\nabla\log\frac{\pi_s}{p_s}\right)(X_s)\,ds,
\]
where $p$ and $\pi$ are the prior and posterior densities, and $a$ is the diffusion
matrix for the signal.  The integral here is the average quadratic variation of the
dual filter in the Fisher metric, and the integrand is the mean-square error for the
dual observation function $h^d(X_s,p):=(\sigma^*\nabla\log p)(X_s)$, where $\sigma$
is a matrix square-root of $a$ \cite{newt3}.

\end{document}